\numberwithin{equation}{section}
\newtheorem{theorem}{Theorem}[section]
\newtheorem{lemma}[theorem]{Lemma}
\newtheorem{definition}[theorem]{Definition}
\newtheorem{remark}[theorem]{Remark}
\newtheorem{corollary}[theorem]{Corollary}
\def\R{\mathbb{R}}
\def\RR{\mathcal{R}}\def\N{\mathbb{N}}
\def\H{\mathcal H}
\def\eps{\varepsilon}
\def\S{\mathbb{S}}
\def\step#1#2{\par\noindent{\underline{\it Step~#1.}}\emph{ #2}\\}
\def\bal{\begin{aligned}}
\def\eal{\end{aligned}}
\title[Some estimates for the higher eigenvalues of sets close to the ball]{Some estimates for the higher eigenvalues\\of sets close to the ball}
\author{Dario Mazzoleni}\address{Dipartimento di Matematica F. Casorati, Universit\`a degli Studi di Pavia, Via Ferrata, 1 -- 27100 Pavia, Italy. {\tt dario.mazzoleni@unipv.it}}
\author{Aldo Pratelli}\address{Department Mathematik, Friedrich-Alexander Universit\"at Erlangen-N\"urnberg, Cauerstrasse, 11 -- 91058 Erlangen, Germany. {\tt pratelli@math.fau.de}}
\begin{document}

\begin{abstract}
In this paper we investigate the behavior of the eigenvalues of the Dirichlet Laplacian on sets in $\R^N$ whose first eigenvalue is close to the one of the ball with the same volume. In particular in our main Theorem~\ref{finalmain} we prove that, for all $k\in\N$, there is a positive constant $C=C(k,N)$ such that for every open set $\Omega\subseteq \R^N$ with unit measure and with $\lambda_1(\Omega)$ not excessively large one has
\begin{align*}
|\lambda_k(\Omega)-\lambda_k(B)|\leq C (\lambda_1(\Omega)-\lambda_1(B))^\beta\,, && \lambda_k(B)-\lambda_k(\Omega)\leq Cd(\Omega)^{\beta'}\,,
\end{align*}
where $d(\Omega)$ is the Fraenkel asymmetry of $\Omega$, and where $\beta$ and $\beta'$ are explicit exponents, not depending on $k$ nor on $N$; for the special case $N=2$, a better estimate holds.
\end{abstract}

\maketitle

\section{Introduction}
The study of eigenvalues of the Dirichlet Laplacian from the point of view of shape optimization and spectral geometry has been strongly investigated in recent years.
This is a very beautiful and interesting topic mostly because there are still a lot of rather basic facts that are very hard to prove, although they seem very ``natural''.

A classical shape optimization problem is the minimization of the principal frequency (i.e., the first Dirichlet eigenvalue) among sets of given volume. This problem was solved by Faber and Krahn in the 1920s, who independently proved that for every open set $\Omega\subseteq \R^N$ of unit measure one has $\lambda_1(\Omega)\geq \lambda_1(B)$, where $B$ is the ball of unit measure centered at the origin and $\lambda_k(\cdot)$ is the $k$-th eigenvalue of the Dirichlet Laplacian; in addition, the inequality is strict unless $\Omega$ itself is a ball. In more recent years there have been many studies concerning improvements of the Faber--Krahn inequality in a \emph{quantitative} direction. This accounts to understand whether sets that are close to optimality (i.e., with $\lambda_1(\Omega)-\lambda_1(B)\ll 1$) can be said to be ``close'' to the ball in some sense. A complete answer to this question was given by Brasco, De Philippis and Velichkov~\cite{bdpv} (after several other contributions on this topic, for which we refer to the paper itself, as well as to the survey~\cite{fusco}). They proved that, for every open set $\Omega\subseteq\R^N$ with unit measure, one has
\begin{equation}\label{bdv}
\lambda_1(\Omega)-\lambda_1(B) \geq \frac 1C\, d(\Omega)^2\,,
\end{equation}
where $d(\Omega)$ is the well-known \emph{Fraenkel asymmetry}, defined as
\begin{equation}\label{fraenas}
d(\Omega):= \inf \Big\{ \big|\Omega\Delta (x+B)\big|\,, x\in\R^N\Big\}\,.
\end{equation}
It is important to notice that the exponent $2$ in~(\ref{bdv}) is sharp.

In this paper we consider a set for which the difference $\lambda_1(\Omega)-\lambda_1(B)$ is very small, hence which is close to a suitable unit ball according to~(\ref{bdv}). We are interested in the following question: is it true that also the higher eigenvalues $\lambda_k(\Omega)$ are close to the corresponding eigenvalues of the unit ball? It is simple to guess that it must be so, our aim is to prove this with an estimate which goes like a power. More precisely, our main result is the following.

\begin{theorem}\label{finalmain}
Let $k,N\in \N$, and let $\Omega\subseteq \R^N$ be an open set of unit measure with $\lambda_1(\Omega)\leq \lambda_1(B)+1$. Then, for every $\eta>0$ there exists a constant $C=C(k,N,\eta)$ such that for $N\geq 3$
\begin{gather}
-C\, (\lambda_1(\Omega)-\lambda_1(B))^{\frac 16-\eta} \leq \lambda_k(\Omega)-\lambda_k(B)\leq C(\lambda_1(\Omega)-\lambda_1(B))^{\frac1{12}-\eta}\,, \label{mainest}\\
\lambda_k(B)-\lambda_k(\Omega)\leq Cd(\Omega)^{\frac13-\eta}\,, \label{estfraenkel}
\end{gather}
while for $N=2$ the exponents can be made better as follows,
\begin{gather}
-C\, (\lambda_1(\Omega)-\lambda_1(B))^{\frac 14-\eta} \leq \lambda_k(\Omega)-\lambda_k(B)\leq C(\lambda_1(\Omega)-\lambda_1(B))^{\frac18-\eta}\,, \tag{\ref{mainest}'}\label{mainest'}\\
\lambda_k(B)-\lambda_k(\Omega)\leq Cd(\Omega)^{\frac12-\eta}\,.\tag{\ref{estfraenkel}'}\label{estfraenkel'}
\end{gather}
\end{theorem}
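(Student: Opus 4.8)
The plan is to compare the eigenvalues of $\Omega$ with those of a ball by using the quantitative Faber–Krahn inequality~(\ref{bdv}) together with a two-sided control relating $\lambda_k$ to the geometry of the set. I would proceed in three stages.

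\emph{Step 1: Upper bound for $\lambda_k(\Omega)$ via a good inner ball.} The first eigenfunction $u_1$ of $\Omega$ is close (in $L^2$ and, after elliptic regularity, in stronger norms) to the first eigenfunction of a nearby ball $x+B$, with a quantitative rate coming from~(\ref{bdv}): if $\lambda_1(\Omega)-\lambda_1(B)=:\delta$, then $d(\Omega)\lesssim\delta^{1/2}$, so $|\Omega\Delta(x+B)|\lesssim\delta^{1/2}$. One then shows that $\Omega$ contains a slightly shrunk ball $B_{1-\rho}(x)$ with $\rho$ a small power of $\delta$ (or of $d(\Omega)$), perhaps after discarding a small-capacity bad set; this is where one must be careful, since set-closeness in measure does not immediately give an inner ball, and one needs to exploit that $u_1>0$ is quantitatively bounded below on a large portion of $\Omega$. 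By domain monotonicity $\lambda_k(\Omega)\le\lambda_k(B_{1-\rho}(x))=(1-\rho)^{-2}\lambda_k(B)$, which gives the upper bound in~(\ref{mainest}) with the stated exponent after optimizing the losses.

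\emph{Step 2: Lower bound for $\lambda_k(\Omega)$, equivalently the upper bound~(\ref{estfraenkel}) on $\lambda_k(B)-\lambda_k(\Omega)$.} For this direction I would run the Rayleigh quotient argument in the reverse direction: take the first $k$ eigenfunctions $v_1,\dots,v_k$ of the ball $x+B$, cut them off to live inside $\Omega$ by multiplying by a Lipschitz function that vanishes outside $\Omega$ (more precisely outside $\Omega\cap(x+B)$, or outside a capacitary-good core of $\Omega$), and estimate the error in the numerator and denominator of the Rayleigh quotients of the resulting $k$-dimensional space. The error terms are controlled by the measure (and capacity) of the symmetric difference $\Omega\Delta(x+B)$, hence by a power of $d(\Omega)$; the Courant–Fischer min-max principle then yields $\lambda_k(\Omega)\le\lambda_k(x+B)+(\text{small})=\lambda_k(B)+(\text{small})$. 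Combined with Step 1 this also upgrades to the two-sided estimate~(\ref{mainest}) once $d(\Omega)$ is replaced by $\delta^{1/2}$ via~(\ref{bdv}).

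\emph{Step 3: The opposite inequality $\lambda_k(\Omega)\ge\lambda_k(B)-(\text{small})$.} Here the ball is \emph{not} extremal for $\lambda_k$, so one cannot invoke a Faber–Krahn-type inequality directly. Instead I would argue by a spectral stability/compactness-type estimate: use that the first eigenfunction $u_1$ of $\Omega$ concentrates near $x+B$ to build, from the low eigenfunctions of $\Omega$, competitors in the ball, controlling the defect by $\delta$; alternatively, use the upper bound from Step 2 applied to the ball relative to $\Omega$ in a symmetric fashion is not available, so one genuinely needs the $L^\infty$/capacitary closeness of $u_1$. This is the \textbf{main obstacle}: transferring \emph{all} the low eigenfunctions (not just $u_1$) between $\Omega$ and $B$ requires showing that the spectral projections are close, which one typically gets from a quantitative resolvent estimate $\|(−\Delta_\Omega−z)^{-1}-(−\Delta_B−z)^{-1}\|$ small, and in the Dirichlet setting this demands the inner/outer ball information from Step 1 together with a careful treatment of the small bad set via capacity. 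Once the resolvents (hence spectral projections onto a window containing $\lambda_1(B),\dots,\lambda_k(B)$) are shown to be close in operator norm by a power of $\delta$, all eigenvalues $\lambda_1,\dots,\lambda_k$ match up to that power, giving both bounds in~(\ref{mainest}) simultaneously.

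Finally, the improved exponents for $N=2$ come from the fact that in the plane capacity estimates and the Sobolev embedding are more favorable: the error committed when cutting off near the small bad set scales better (logarithmically rather than polynomially in the relevant radius), so the power losses in Steps 1–3 are halved, yielding~(\ref{mainest}') and~(\ref{estfraenkel}'). Throughout, the arbitrarily small $\eta>0$ absorbs logarithmic factors and the non-optimal bookkeeping in the capacitary cut-offs; I do not expect the exponents $1/6,1/12,1/3$ (resp.\ $1/4,1/8,1/2$) to be sharp, only to follow transparently from chaining~(\ref{bdv}) with the quantitative domain-perturbation estimates above.
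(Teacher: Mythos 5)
There are genuine gaps here, starting with Step~1, which rests on a false premise. Closeness of $\lambda_1(\Omega)$ to $\lambda_1(B)$ (or smallness of $d(\Omega)$) does \emph{not} imply that $\Omega$ contains a ball of radius close to $\omega_N^{-1/N}$: one can delete from $B$ a dense collection of tiny holes of arbitrarily small total capacity, which leaves $\lambda_1$ essentially unchanged but destroys every interior ball. The paper's introduction flags exactly this obstruction (``a set with small symmetric difference with a unit ball could also have huge eigenvalues, for instance if there are several small holes''). Note also that ``discarding a bad set from $\Omega$'' only shrinks $\Omega$ further, so it cannot manufacture an inner ball. Hence domain monotonicity from an inner ball is simply not available, and the upper bound for $\lambda_k(\Omega)$ cannot be obtained this way. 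Your Step~2 is mislabeled (cutting off ball eigenfunctions to live in $\Omega$ and invoking min--max gives the \emph{upper} bound $\lambda_k(\Omega)\le\lambda_k(B)+\cdots$, not the lower one), and in any case it too is blocked by the same issue: multiplying $v_j$ by a Lipschitz cut-off that vanishes on $B\setminus\Omega$ costs an amount controlled by the \emph{capacity} of $B\setminus\Omega$, which the hypotheses do not bound. The paper sidesteps this entirely with a short but decisive trick (Lemma~\ref{leminclus}): after truncating $\Omega$ to $\widetilde\Omega\subseteq B_{(n+1)\eps^\alpha}$ (Lemma~\ref{newone}, Corollary~\ref{corollast}), it uses $\tilde u_j:=\frac{v_j}{v_1}u_1$ as competitors. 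These automatically lie in $H^1_0(\widetilde\Omega)$ because $u_1$ already vanishes on the holes, $v_j/v_1$ is globally Lipschitz on $B$, and expanding $u_1$ in the eigenbasis of $B$ gives the error $\delta\lesssim\sqrt{\lambda_1-\lambda_1(B)}$ with no capacity estimate needed.

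Step~3 is the other genuine gap. You correctly identify that proving $\lambda_k(\Omega)\ge\lambda_k(B)-Cd(\Omega)^\alpha$ is the harder direction, but the resolvent/spectral-projection route you sketch is left entirely unexecuted and would again hinge on capacitary control that is not a priori available. The paper's actual argument (Section~\ref{sec:b-om}) is much more elementary: a bootstrap/slicing argument (Lemma~\ref{lemmagen}, Corollary~\ref{maincor}) locates a good radius $\bar t\lesssim\eps^\alpha$ past $\omega_N^{-1/N}$ where all $u_1,\dots,u_k$ have small trace and small flux; one then truncates each $u_j$ there with a short linear ramp to build $\hat u_j\in H^1_0(\widehat\Omega)$ with $\widehat\Omega\subseteq B_{(n+1)\eps^\alpha}$, yielding $\lambda_k(\widehat\Omega)\le\lambda_k(\Omega)+C\eps^\alpha$ (Lemma~\ref{lastone}), and concludes by domain monotonicity against the \emph{enlarged} ball. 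Finally, the improved $N=2$ exponents do not come from logarithmic capacity: they come from a sharper trace estimate on slices (the exponent $\theta=2-3\alpha+\gamma$ versus $1-\alpha$ in~\eqref{deftheta}) together with a dedicated planar argument in Lemma~\ref{newone} based on reflection across a chord.
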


Let us briefly comment on the result. For a set $\Omega$ with unit volume and $\lambda_1(\Omega)-\lambda_1(B)\ll 1$, the closeness of $\lambda_k(\Omega)$ to $\lambda_k(B)$ is not an obvious consequence of~(\ref{bdv}). Indeed, that estimate says that $\Omega$ has a small symmetric difference with a unit ball; nevertheless, a set with small symmetric difference with a unit ball could also have huge eigenvalues, for instance if there are several small holes in the set. On the bright side, it is not hard to guess that the effect of these holes is more important on the small eigenvalues, hence if there is only a small effect on the first eigenvalue, the same should be true also for the higher ones. This rough argument is more or less at the basis of our construction: more precisely, we will use the information that the first eigenvalue is only a little bigger than the one of the ball to infer that the same is true for the following eigenvalues; to do so, we will use the first eigenfunction to construct competitors to the other eigenfunctions.\par

Another comment has to be done about the sign of $\lambda_k(\Omega)-\lambda_k(B)$: indeed, by the Faber-Krahn inequality we know that $\lambda_1(\Omega)\geq \lambda_1(B)$, but there is no reason why also $\lambda_k(\Omega)$ should be bigger than $\lambda_k(B)$, on the contrary it can easily happen to be smaller. In fact, the estimate~(\ref{mainest}) is done by two inequalities, with different exponents, which need completely different approaches.\par

A last comment has to be done about the constant $C$ in Theorem~\ref{finalmain}: our main aim is to obtain a good power, while we are not very interested in the value of $C$. As a consequence, even if the constant is explicitely computable from our construction, we avoid to do so for simplicity; hence, as usual, the letter $C$ will be used for a geometrical constant, which can increase from line to line.\par

The plan of the paper is rather simple: in the short Section~\ref{sec:prel} below, we recall some basic facts about eivenvalues of the Dirichlet Laplacian and we fix the notation. Then, in Section~\ref{sec:b-om} we prove, mainly via a self-improving argument, the estimates~(\ref{estfraenkel}) and~(\ref{estfraenkel}'), from which the left inequality in~(\ref{mainest}) and~(\ref{mainest'}) immediately follows by~(\ref{bdv}). Finally, in Section~\ref{sec:om-b} we prove the right inequality in~(\ref{mainest}) and~(\ref{mainest'}).

\begin{remark}
Bertrand and Colbois, in~\cite{bc}, proved some quantitative estimates for the higher eigenvalues of sets close to the ball, mostly with capacitary methods.
Theorem~\ref{finalmain} improves their estimates~\cite[Th\'eor\`eme~3.3]{bc}, whose exponents depend on the dimension.
\end{remark}

\subsection{Notation and preliminaries\label{sec:prel}}
Throughout the paper, we work in the Euclidean space $\R^N$ and the generic point is written as $x=(x_1,\dots, x_N)\in\R^N$. Given an open set $\Omega\subseteq \R^N$ with finite Lebesgue measure, we denote by $\lambda_k(\Omega)$ its $k$--th eigenvalue of the Dirichlet Laplacian, counted with due multiplicity. A corresponding eigenfunction will be called $u_k$, and it will always be taken with unit  $L^2$ norm. It is important to recall that the eigenvalues can be characterized through a min-max principle, that is, for all $k\in \N$,
\begin{equation}\label{min-max}
\lambda_k(\Omega)=\min_{E_k\subseteq H^1_0(\Omega)}\max_{u\in E_k\setminus \{0\}}\frac{\int_\Omega|Du|^2}{\int_\Omega u^2}\,,
\end{equation}
where the minimum is taken over all the $k-$dimensional subspaces $E_k$ of $H^1_0(\Omega)$. Moreover, whenever $u\in H^1_0(\Omega)$ is given, we denote its so called \emph{Rayleigh quotient} as
\[
\RR(u):=\frac{\int_\Omega{|Du|^2}}{\int_\Omega u^2}\,.
\]
It is well-known, and it directly follows from the min-max principle, that the eigenvalues are monotone with respect to set inclusion, i.e. $\Omega_1\subseteq \Omega_2$ implies $\lambda_k(\Omega_1)\geq \lambda_k(\Omega_2)$. Moreover, the following scaling property holds,
\begin{equation}\label{scaling}
\lambda_k(t\Omega)=t^{-2}\lambda_k(\Omega)\,,\qquad \forall\,k\in\N,\;\;t>0,\;\;\Omega\subseteq \R^N\,.
\end{equation}

Another important property of eigenfunctions is that they are bounded in $L^{\infty}$, only depending on the corresponding eigenvalue; more precisely, one has
\begin{equation}\label{boundlinfty}
\|u_k\|_{L^{\infty}(\Omega)}\leq e^{1/8\pi}\lambda_k(\Omega)^{N/4}\,,
\end{equation}
for a proof see for instance~\cite[Example 2.1.8]{davies}.\par

It is also useful to keep in mind that higher eigenvalues cannot be too big if the first eigenvalue is small; more precisely, there exists a constant $M_k$, depending only on $k$ and on $N$, such that for every set $\Omega\subseteq\R^N$ one has
\begin{equation}\label{dario}
\frac{\lambda_k(\Omega)}{\lambda_1(\Omega)}\leq M_k\,;
\end{equation}
for a simple proof one can refer to~\cite[Theorem~B]{mp}, more involved estimates are proved in~\cite{A}. A special case is given by $k=2$, in which the ratio is actually maximied by the ball, that is,
\[
\frac{\lambda_2(\Omega)}{\lambda_1(\Omega)}\leq \frac{\lambda_2(B)}{\lambda_1(B)}\,,
\]
as proved by Ashbaugh and Benguria in~\cite{ab}. Notice that, as an immediate consequence, for any set $\Omega$ we can write
\[
\lambda_2(\Omega)-\lambda_2(B)\leq \frac{\lambda_2(B)}{\lambda_1(B)}\, \big(\lambda_1(\Omega)-\lambda_1(B)\big)\,,
\]
so the exponent $1/12-\eta$ or $1/8-\eta$ can be replaced by the exponent $1$ in the right inequality in~(\ref{mainest}) and~(\ref{mainest'}) if $k=2$.\par

We conclude with a last piece of notation. As already said, by $B$ we denote the unit ball centered at the origin, then with radius $\omega_N^{-1/N}$. For every small number $\gamma$, positive or negative, we will then denote by $B_\gamma$ the ball centered at the origin and with radius $\omega_N^{-1/N}+\gamma$, so that $B=B_0$. Notice that $|B_\gamma|=1 + N\omega_N^{1/N} \gamma + o(\gamma)$.\\
Finally, given two real numbers $a,b$, we denote minimum between $a$ and $b$ as $a\wedge b$.

\section{The proof of~(\ref{estfraenkel}) and~(\ref{estfraenkel'}), and the estimate of $\lambda_k(\Omega)-\lambda_k(B)$ from below\label{sec:b-om}}

This section is devoted to prove the estimates~(\ref{estfraenkel}) and~(\ref{estfraenkel'}). Notice that, once having these inequalities, the left inequalities in~(\ref{mainest}) and~(\ref{mainest'}) are obvious by~(\ref{bdv}). Our idea is to try to reduce ourself to the case of a set $\Omega$ contained in a ball $B_{d^\alpha}$, where $d=d(\Omega)$ is the Fraenkel asymmetry defined in~(\ref{fraenas}), while $\alpha>0$ is a suitable power. The reason to do so is clear: if $\Omega\subseteq B_{d^\alpha}$, then
\[
\lambda_k(\Omega)\geq \lambda_k(B_{d^\alpha}) \geq \lambda_k(B) - C d^\alpha\,,
\]
thus the searched estimate immediately follows (with exponent $\alpha$). The main tool we will use to reduce us to this simple case will be the following lemma, which will allow us to perform a bootstrap argument. 
\begin{lemma}\label{lemmagen}
Let $i,k\in \N$, let $\alpha>0$, let $\Omega\subseteq\R^N$ be a set of unit measure satisfying $\lambda_1(\Omega)\leq \lambda_1(B)+1$, and let $C_*>0$ and $\gamma\geq 0$ be so that for each $1\leq j \leq k$ it is
\begin{equation}\label{condgamma}
\int_{\Omega\setminus B_{i\eps^\alpha}} |D u_j|^2\,dx\leq C_* \eps^{\gamma}\,,
\end{equation}
being $\eps=|\Omega\Delta B|$. Then, there exists some $\bar t\in \big(i\eps^\alpha,(i+1)\eps^\alpha\big)$ such that
\begin{align}\label{propgamma}
\int_{S_{\bar t}}|Du_j|^2\,d\H^{N-1}\leq C'\eps^{\gamma-\alpha}\,, &&
\H^{N-1}(S_{\bar t})\leq C'\eps^{1-\alpha}\,, &&
\int_{S_{\bar t}} u_j^2\,d\H^{N-1}\leq C' \eps^\theta\,,
\end{align}
for all $j=1,\dots, k$, where $S_t=\Omega\cap \partial B_t$, $C'$ is a constant depending only on $C_*$ and where
\begin{equation}\label{deftheta}
\theta=\left\{ \begin{array}{ll}
2-3\alpha+\gamma &\hbox{for $N=2$}\,,\\
1-\alpha &\hbox{for $N\geq 3$}\,.
\end{array}\right.
\end{equation}
Moreover, for every $1\leq j\leq k$ one also has
\begin{equation}\label{bootstrap}
\bigg|\int_{\R^N\setminus B_{\bar t}}|D u_j|^2\,dx\bigg|\leq C\Big(\eps+\eps^{\frac{\gamma-\alpha+\theta}2}\Big)\,.
\end{equation}
\end{lemma}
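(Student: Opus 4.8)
The plan is to extract the slice $S_{\bar t}$ via an averaging (mean-value) argument on the annular region $A:=B_{(i+1)\eps^\alpha}\setminus B_{i\eps^\alpha}$, whose width is $\eps^\alpha$, and then to build the estimate~(\ref{bootstrap}) by a change of variables that ``pushes'' the exterior part $\Omega\setminus B_{\bar t}$ onto a spherical shell. First I would integrate in $t$ over $(i\eps^\alpha,(i+1)\eps^\alpha)$ the three quantities appearing in~(\ref{propgamma}): by the coarea formula, $\int_{i\eps^\alpha}^{(i+1)\eps^\alpha}\int_{S_t}|Du_j|^2\,d\H^{N-1}\,dt=\int_{\Omega\cap A}|Du_j|^2\leq\int_{\Omega\setminus B_{i\eps^\alpha}}|Du_j|^2\leq C_*\eps^\gamma$ by hypothesis~(\ref{condgamma}); similarly $\int_{i\eps^\alpha}^{(i+1)\eps^\alpha}\H^{N-1}(S_t)\,dt=|\Omega\cap A|\leq|\Omega\Delta B|=\eps$ (since $A$ is essentially outside $B$, up to the $O(\eps^\alpha)$-thin overlap, which I would absorb). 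For the third term one needs $\int_{\Omega\cap A}u_j^2$: here the two cases split. For $N\ge 3$, I use $\|u_j\|_{L^\infty}\le C$ (from~(\ref{boundlinfty}), using $\lambda_j\le M_j\lambda_1\le C$) to bound $\int_{\Omega\cap A}u_j^2\le C|\Omega\cap A|\le C\eps$, giving exponent $1$ and hence $\theta=1-\alpha$ after dividing by the interval length. For $N=2$ this is too lossy, so instead I would combine a Sobolev/Poincaré-type control: the set $\Omega\cap A$ has measure $\le\eps$ and width $\le\eps^\alpha$, and on it $u_j$ vanishes on $\partial\Omega$; a one-dimensional fundamental-theorem-of-calculus estimate in the radial direction gives, for a.e.\ direction, $u_j(t,\cdot)^2\le \eps^\alpha\int|Du_j|^2$ along the radius, so integrating yields $\int_{\Omega\cap A}u_j^2\le C\eps^\alpha\cdot(\text{length of }A)\cdot\int_{\Omega\cap A}|Du_j|^2\lesssim \eps^{2\alpha+\gamma}$, and then averaging over $t$ (dividing by $\eps^\alpha$) gives exponent $2\alpha+\gamma$; but we also multiply by... — more carefully, tracking the radial width gives $\theta=2-3\alpha+\gamma$ as stated. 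In all three integrals the mean value theorem produces a single good $\bar t$ satisfying all $3k$ bounds simultaneously, at the cost of enlarging the constant $C'$ by a factor depending only on $k$.

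Next, for~(\ref{bootstrap}), the idea is the standard ``test function'' trick underlying the whole Section~\ref{sec:b-om}: from the slice estimates we know that on $S_{\bar t}$ both $\int_{S_{\bar t}}|Du_j|^2$ and $\int_{S_{\bar t}}u_j^2$ are small. I would estimate $\int_{\R^N\setminus B_{\bar t}}|Du_j|^2$, where $u_j$ is understood extended by zero outside $\Omega$, by comparing it with the harmonic extension, or more simply by a direct computation: writing the part of $\Omega$ outside $B_{\bar t}$ and using that its measure is $\le\eps$ (again because it lies outside $B$ up to an $O(\eps^\alpha)$-thin layer), one has $\int_{\Omega\setminus B_{\bar t}}|Du_j|^2\le C_*\eps^\gamma$, but this is not yet~(\ref{bootstrap}) — the point of~(\ref{bootstrap}) is a genuine improvement in the exponent. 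The mechanism is: the trace data on $S_{\bar t}$ (small $L^2$ norm $C'\eps^\theta$) together with the smallness of $\H^{N-1}(S_{\bar t})\le C'\eps^{1-\alpha}$ let one bound the Dirichlet energy of a competitor obtained by replacing $u_j$ on $\R^N\setminus B_{\bar t}$ by a suitable extension whose energy is controlled by the boundary data; a Cauchy–Schwarz pairing of the gradient-trace term $\eps^{(\gamma-\alpha)/2}$ with the $u_j$-trace term $\eps^{\theta/2}$ produces the cross term $\eps^{(\gamma-\alpha+\theta)/2}$, while the $\eps$ term comes from the volume $|\Omega\setminus B_{\bar t}|$. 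So the core computation is an integration by parts $\int_{\Omega\setminus B_{\bar t}}|Du_j|^2 = -\int_{\Omega\setminus B_{\bar t}} u_j\Delta u_j + \int_{S_{\bar t}} u_j\,\partial_\nu u_j = \lambda_j\int_{\Omega\setminus B_{\bar t}}u_j^2 + \int_{S_{\bar t}}u_j\,\partial_\nu u_j$, and then one bounds $\int_{\Omega\setminus B_{\bar t}}u_j^2\le C\eps$ (in $N\ge 3$ via $L^\infty$, with the analogous radial argument in $N=2$) and $\big|\int_{S_{\bar t}}u_j\,\partial_\nu u_j\big|\le\big(\int_{S_{\bar t}}u_j^2\big)^{1/2}\big(\int_{S_{\bar t}}|Du_j|^2\big)^{1/2}\le C'\eps^{(\theta+\gamma-\alpha)/2}$.

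The main obstacle I expect is getting the $N=2$ exponent $\theta=2-3\alpha+\gamma$ sharp: one must be careful that the radial ``$u_j(t)^2\le(\text{width})\cdot\int|Du_j|^2$'' estimate is applied on the correct one-dimensional slices (rays), that the measure constraint $|\Omega\cap A|\le\eps$ and the width constraint $\eps^\alpha$ are combined in the right order, and that the final division by $\eps^\alpha$ from the mean value theorem is accounted for; an off-by-one in the powers of $\eps^\alpha$ here would propagate through the bootstrap. A secondary subtlety is the claim $|\Omega\cap(\R^N\setminus B_{\bar t})|\le C\eps$ and $|\Omega\cap A|\le C\eps$: these hold because $\bar t>i\eps^\alpha$ means $B_{\bar t}\supseteq B$ only up to the difference $B_{\bar t}\setminus B$, whose measure is $O(\eps^\alpha)$ — so one should rather write $|\Omega\setminus B_{\bar t}|\le|\Omega\setminus B|+|B\setminus B_{\bar t}|\le\eps+C\eps^\alpha$, wait, $\bar t\ge i\eps^\alpha\ge 0$ so $B_{\bar t}\supseteq B$ is false in general; the correct bound is $|\Omega\setminus B_{\bar t}|\le|\Omega\setminus B|\le\eps$ exactly when $\bar t\ge\omega_N^{-1/N}$, which fails for small $\eps$, so in fact one only gets $|\Omega\setminus B_{\bar t}|\le\eps+|B\setminus B_{\bar t}|$ and $|B\setminus B_{\bar t}|=O(\eps^\alpha)\le O(\eps^\alpha)$; since $\alpha$ will be chosen so that the $\eps^\alpha$ terms are dominated, this is harmless but must be stated. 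Everything else is routine: coarea, mean value theorem, Cauchy–Schwarz, and the elementary identity $|B_\gamma|=1+N\omega_N^{1/N}\gamma+o(\gamma)$ recalled in Section~\ref{sec:prel}.
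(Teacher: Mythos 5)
Your overall scheme — mean-value/Markov selection of $\bar t$ over the annulus, then an integration-by-parts identity together with Cauchy--Schwarz for~\eqref{bootstrap} — is the same one the paper follows, and your treatment of~\eqref{bootstrap} (bounding $\int_{\Omega\setminus B_{\bar t}}u_j^2\le C\eps$ via the uniform $L^\infty$ bound and pairing the two trace terms) is essentially identical to the paper's. However, there are two concrete problems in the selection step.

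First, a notational misreading that confuses several of your side remarks: in this paper $B_\gamma$ denotes the ball of radius $\omega_N^{-1/N}+\gamma$, not the ball of radius $\gamma$. Hence for every $t\ge 0$ one has $B_t\supseteq B$, so $\Omega\setminus B_t\subseteq\Omega\setminus B$ and $|\Omega\setminus B_t|\le\eps$ \emph{exactly}; likewise the annulus $A=B_{(i+1)\eps^\alpha}\setminus B_{i\eps^\alpha}$ lies entirely outside $B$, so $|\Omega\cap A|\le\eps$ with no ``$O(\eps^\alpha)$ overlap'' to absorb, and $S_t=\Omega\cap\partial B_t\subseteq\Omega\setminus B$. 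None of the caveats in your last paragraph are needed, and the statement ``$B_{\bar t}\supseteq B$ is false in general'' is wrong under the paper's convention.

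Second, and more seriously, the $N=2$ case of the third estimate in~\eqref{propgamma} has a real gap. Your proposed ``radial fundamental-theorem-of-calculus'' bound $u_j(t,\cdot)^2\le\eps^\alpha\int|Du_j|^2$ along rays cannot work, because $u_j$ does \emph{not} vanish at the inner radius $\omega_N^{-1/N}+i\eps^\alpha$ of the annulus — the zero boundary condition is on $\partial\Omega$, not on the inner sphere — so there is no starting point from which to integrate radially. Even granting it formally, your own exponent count gives $\alpha+\gamma$, not $2-3\alpha+\gamma$, and the trailing ``more carefully, tracking the radial width gives $\theta=2-3\alpha+\gamma$'' asserts the answer without a derivation. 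The correct mechanism (the one the paper uses) is \emph{tangential}, not radial: for $t\in I_2$ the slice $S_t\subseteq\partial B_t$ has $\H^{N-1}(S_t)\le 3\eps^{1-\alpha}$, and $u_j$ vanishes on $\partial S_t\subseteq\partial\Omega$, so the Faber--Krahn inequality on $S_t$ (for $N=2$, just the one-dimensional Poincar\'e inequality on each arc) yields $\int_{S_t}u_j^2\le\lambda_1(S_t)^{-1}\int_{S_t}|D_\tau u_j|^2\le C\,\H^{N-1}(S_t)^{2/(N-1)}\int_{S_t}|Du_j|^2$; combining with $\int_{I_2}\int_{S_t}|Du_j|^2\le C_*\eps^\gamma$ and averaging over $I_2$ produces $\theta=\frac{2(1-\alpha)}{N-1}+\gamma-\alpha$, i.e.\ $2-3\alpha+\gamma$ for $N=2$. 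In other words, you correctly identified the danger spot, but pointed the one-dimensional Poincar\'e estimate in the wrong direction; swapping radial for tangential is exactly what closes the gap.
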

\begin{proof}
Let us denote for brevity by $I$ the interval $\big(i\eps^\alpha,(i+1)\eps^\alpha\big)$, which has length $\eps^\alpha$. By~(\ref{condgamma}), for every $1\leq j\leq k$ we have
\[
C_* \eps^\gamma \geq \int_{\R^N\setminus (\Omega \cap B_{i\eps^\alpha})} |D u_j|^2\,dx
\geq \int_{t\in I} \int_{S_t} |Du_j|^2\, d\H^{N-1}\, dt\,,
\]
then the set $I_1\subseteq I$ of all those $t\in I$ such that
\[
\int_{S_t} |Du_j|^2 \, d\H^{N-1}\leq 3 k C_* \eps^{\gamma-\alpha}
\]
for every $1\leq j \leq k$ has length at least $2/3\, \eps^\alpha$. Then, any $\bar t\in I_1$ satisfies the first inequality in~(\ref{propgamma}), up to choose $C'\geq 3k C_*$.\par

In addition, since
\[
\eps = |\Omega\Delta B| \geq \int_{t\in I_1} \H^{N-1}( S_t) \, dt\,,
\]
then the subset $I_2$ of $I_1$ of all those $t$ such that $\H^{N-1}(S_t)\leq 3 \eps^{1-\alpha}$ has measure at least $\eps^\alpha/3$. Notice that any $\bar t\in I_2$ satisfies also the second inequality in~(\ref{propgamma}), as soon as $C'\geq 3$.\par

Recalling the rescaling properties~(\ref{scaling}) of the eigenvalues, as well as the Faber-Krahn inequality, there exists a geometrical constant $C_N$ such that, for every $t\in I_2$, one has
\[
\lambda_1(S_t) \geq C_N \H^{N-1}(S_t)^{-\frac 2{N-1}}
\geq C_N (3\eps^{1-\alpha})^{-\frac 2{N-1}} = \widetilde C_N \eps^{-\frac{2(1-\alpha)}{N-1}}\,.
\]
As a consequence, again by~(\ref{condgamma}) we have
\[\begin{split}
C_* \eps^{\gamma} &\geq \int_{t\in I_2} \int_{S_t} |D u_j|^2\, d\H^{N-1}\,dt
\geq \int_{t\in I_2} \lambda_1(S_t) \int_{S_t} u_j^2\, d\H^{N-1}\,dt\\
&\geq \widetilde C_N \eps^{-\frac{2(1-\alpha)}{N-1}} \int_{t\in I_2} \int_{S_t} u_j^2\, d\H^{N-1}\,dt\,,
\end{split}\]
which ensures the existence of some $\bar t\in I_2$ such that
\[
\int_{S_{\bar t}} u_j^2 \leq 3\, \frac {C_*}{\widetilde C_N}\, k \eps^{\frac{2(1-\alpha)}{N-1}+\gamma-\alpha}
\leq C'\eps^{\frac{2-(N+1)\alpha}{(N-1)}+\gamma}
\]
for every $1\leq j\leq k$, where the last inequality holds up to define $C'\geq 3 k C_*/\widetilde C_N$. Hence, any such $\bar t$ satisfies also the last estimate in~(\ref{propgamma}) if $\theta=\frac{2-(N+1)\alpha}{(N-1)}+\gamma$: notice that this number equals $2-3\alpha+\gamma$ if $N=2$, so we have concluded~(\ref{propgamma}) if $N=2$. Moreover, by~(\ref{boundlinfty}) and~(\ref{dario}), and since $\lambda_1(\Omega)\leq \lambda_1(B)+1$, we have
\begin{equation}\label{stiminf}
\|u_j\|_{L^\infty} \leq C \lambda_j(\Omega)^{N/4} \leq C \lambda_k(\Omega)^{N/4} \leq C (M_k\lambda_1(\Omega))^{N/4} \leq C
\end{equation}
for some constant $C$ only depending on $k$ and $N$. Then, for every $\bar t\in I_2$ one directly has
\[
\int_{S_{\bar t}} u_j^2\leq C \H^{N-1}(S_{\bar t})\leq C' \eps^{1-\alpha}\,,
\]
up to modify $C'$ in the obvious way, so the last estimate in~(\ref{propgamma}) holds with the choice $\theta=1-\alpha$, which concludes also the case $N\geq 3$.\par
Finally, to obtain~(\ref{bootstrap}) and hence conclude the proof, it is enough to recall that $-\Delta u_j = \lambda_j(\Omega) u_j$ in $\Omega$ and to use Divergence Theorem, (\ref{stiminf}), H\"older inequality, and~(\ref{propgamma}), getting
\[\begin{split}
\bigg|\int_{\R^N\setminus B_{\bar t}} |Du_j|^2\, dx\bigg|
&\leq \lambda_j \bigg|\int_{\R^N\setminus B_{\bar t}} u_j^2\, dx\bigg| + \int_{S_{\bar t}} |Du_j| |u_j| \, d\H^{N-1}\\
&\leq C\eps +\sqrt{\int_{S_{\bar t}} |Du_j|^2}\sqrt{\int_{S_{\bar t}} u_j^2}
\leq C\eps + C\eps^{\frac{\gamma-\alpha+\theta}2}\,.
\end{split}\]
\end{proof}

\begin{remark}
As it appears clear from the above proof, the last inequality of~(\ref{propgamma}) holds also with $\theta=1-\alpha$ when $N=2$, and it holds also with $\theta=\frac{2-(N+1)\alpha}{(N-1)}+\gamma$ when $N\geq 3$. Nevertheless, for the exponents $\alpha$ which will be used in this paper, the choice of $\theta$ done in~(\ref{deftheta}) is the best one.
\end{remark}

\begin{corollary}\label{maincor}
For every $\alpha<1/(3\wedge N)$ and every $k,\, N\in\N$ there exist $n\in \N$ and $C>0$ such that, for every set $\Omega\subseteq\R^N$ of unit measure with $\lambda_1(\Omega)\leq \lambda_1(B)+1$ and $\eps=|\Omega\Delta B|<1$, there is $\bar t < n\eps^\alpha$ such that, for every $1\leq j\leq k$,
\begin{align}\label{finalesti}
\int_{S_{\bar t}} |Du_j|^2 \leq C\,, &&
\int_{S_{\bar t}} u_j^2 \leq C \eps^{(1\vee (4-N))(1-\alpha)}\,, &&
\int_{\Omega\setminus B_{\bar t}} |Du_j|^2 \leq C \eps^\alpha\,.
\end{align}
\end{corollary}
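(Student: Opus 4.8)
The plan is to iterate Lemma~\ref{lemmagen}. We start the bootstrap with the trivial bound: setting $i=0$, the hypothesis~(\ref{condgamma}) holds with $\gamma=0$ and some geometric constant $C_*$, since $\int_\Omega |Du_j|^2 = \lambda_j(\Omega) \le M_k\lambda_1(\Omega) \le C$ by~(\ref{dario}) and the assumption $\lambda_1(\Omega)\le\lambda_1(B)+1$. Now we apply Lemma~\ref{lemmagen} repeatedly. At each step, if~(\ref{condgamma}) holds on $\Omega\setminus B_{i\eps^\alpha}$ with some exponent $\gamma$, the lemma produces a radius $\bar t\in(i\eps^\alpha,(i+1)\eps^\alpha)$ at which, by~(\ref{bootstrap}),
\[
\int_{\R^N\setminus B_{\bar t}}|Du_j|^2\,dx \le C\Big(\eps + \eps^{\frac{\gamma-\alpha+\theta}{2}}\Big)\,,
\]
for all $1\le j\le k$; since $\bar t > i\eps^\alpha$ and eigenvalues are monotone, this is a fortiori a bound of the form~(\ref{condgamma}) with $i$ replaced by $i+1$ and a new exponent $\gamma' = \min\big(1, \tfrac{\gamma-\alpha+\theta}{2}\big)$ (the $\eps$-term has exponent $1$). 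So each application of the lemma either already reaches exponent $\ge 1$, or strictly increases the exponent according to the map $\gamma \mapsto (\gamma-\alpha+\theta)/2$.

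The key point is that this affine map on $\gamma$ is a contraction with slope $1/2$, so its iterates converge to the fixed point $\gamma_\infty = \theta - \alpha$. One checks that with the choice of $\theta$ in~(\ref{deftheta}) and the restriction $\alpha < 1/(3\wedge N)$, this fixed point satisfies $\gamma_\infty \ge \alpha$: indeed for $N\ge 3$ one has $\theta = 1-\alpha$, so $\gamma_\infty = 1-2\alpha$, and $1-2\alpha \ge \alpha \iff \alpha\le 1/3$; for $N=2$ the exponent $\theta = 2-3\alpha+\gamma$ depends on $\gamma$ as well, so the recursion becomes $\gamma\mapsto(2\gamma - 4\alpha + 2)/2 = \gamma + 1 - 2\alpha$, which is no longer a contraction but simply increases $\gamma$ by the fixed positive amount $1-2\alpha$ at each step (using $\alpha<1/2$). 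In either case, after a finite number $n = n(k,N,\alpha)$ of iterations — with each step costing one unit of $i$, hence shifting the excluded ball outward by $\eps^\alpha$ — we reach a radius $\bar t < n\eps^\alpha$ at which $\int_{\R^N\setminus B_{\bar t}}|Du_j|^2 \le C\eps^\alpha$ for all $j\le k$. That is the third inequality in~(\ref{finalesti}), and feeding this $\gamma\ge\alpha$ back once more into~(\ref{propgamma}) gives the first inequality $\int_{S_{\bar t}}|Du_j|^2 \le C'\eps^{\gamma-\alpha} \le C$ and, via the stated formula for $\theta$, the middle inequality $\int_{S_{\bar t}} u_j^2 \le C'\eps^\theta$; one checks $\theta \ge (1\vee(4-N))(1-\alpha)$ with the final value of $\gamma$, which is exactly the exponent claimed (for $N\ge 3$, $\theta=1-\alpha$ and $1\vee(4-N)$ equals $1$ when $N\ge 3$ except $N=3$ gives $1$, $N\ge 4$ gives $\le 0$... so the exponent is $\le 1-\alpha$, consistent; for $N=2$, $4-N=2$ and $\theta$ at the final $\gamma$ is indeed $\ge 2(1-\alpha)$).

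The main obstacle is bookkeeping the recursion correctly in the two regimes and verifying that the fixed point (resp. the accumulated exponent) lands above $\alpha$ exactly under the hypothesis $\alpha<1/(3\wedge N)$ — this is where the threshold $1/(3\wedge N)$ comes from and it must be checked that the inequalities are not merely asymptotic but give, after the explicit finite number $n$ of steps, a clean bound of the form $C\eps^\alpha$ with $C$ and $n$ depending only on $k,N,\alpha$. A secondary point to handle carefully is that the constant $C_*$ in~(\ref{condgamma}) changes (by a bounded factor depending on $k$) at each iteration, so one must note that after a fixed number $n$ of steps the final constant still depends only on $k,N,\alpha$, which is fine since $n$ is itself fixed.
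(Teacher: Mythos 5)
Your proposal is correct and follows essentially the same route as the paper: bootstrap Lemma~\ref{lemmagen} starting from the trivial bound $\gamma_0=0$, track the exponent through the recursion, and stop once the exponent reaches $\alpha$, at which point one last application of the lemma yields the three estimates in~(\ref{finalesti}). The only cosmetic difference is that you identify the $N\ge3$ recursion as an affine contraction with fixed point $1-2\alpha$, whereas the paper writes out the closed-form $\gamma_i=\frac{2^i-1}{2^{i-1}}(\tfrac12-\alpha)$; both readings yield the same conclusion under $\alpha<1/(3\wedge N)$.
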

\begin{proof}
First of all, we notice that it is enough to find some $n\in\N$ and a constant $C_0$, both depending only on $N,\,k$ and $\alpha$, so that
\begin{equation}\label{enough}
\int_{\Omega\setminus B_{(n-1)\eps^\alpha}} |Du_j|^2 \leq C_0\eps^\alpha\,.
\end{equation}
Indeed, in this case we have the validity of~(\ref{condgamma}) with $\gamma=\alpha$ and $C_*=C_0$, then Lemma~\ref{lemmagen} provides us with some $\bar t\in \big((n-1)\eps^\alpha,n\eps^\alpha\big)$ satisfying~(\ref{propgamma}). The first estimate in~(\ref{finalesti}) is directly given by the first estimate in~(\ref{propgamma}). Concerning the second one, it follows from the third estimate in~(\ref{propgamma}): indeed, if $N\geq 3$ we have to show that $\int_{S_{\bar t}} u_j^2\leq C\eps^{1-\alpha}$, which is true since $\theta=1-\alpha$. Instead, for $N=2$, we have to show that $\int_{S_{\bar t}} u_j^2 \leq C\eps^{2-2\alpha}$, which is again true because $\theta=2-2\alpha$. Finally, the third estimate in~(\ref{finalesti}) follows by~(\ref{enough}) since $\bar t> (n-1)\eps^\alpha$. Summarizing, to conclude the thesis we only have to show~(\ref{enough}).\par

Let us start with the case $N=2$. Recall that
\begin{equation}\label{obvious}
\int_{\Omega\setminus B} |Du_j|^2 \, dx \leq \int_\Omega |Du_j|^2\, dx = \lambda_j(\Omega) \leq \lambda_k(B)\,,
\end{equation}
hence~(\ref{condgamma}) clearly holds for every $1\leq j\leq k$ with $i=0$, $\gamma_0=0$ and $C_*=\lambda_k(B)$. We can then apply Lemma~\ref{lemmagen} to obtain some $\bar t_0\in (0,\eps^\alpha)$ such that~(\ref{propgamma}) and~(\ref{bootstrap}) hold. In particular, recalling that $\theta=2-3\alpha$, (\ref{bootstrap}) gives
\[
\int_{\Omega\setminus B_{\eps^\alpha}} |Du_j|^2 \leq 
\int_{\Omega\setminus B_{\bar t_0}} |Du_j|^2 \leq 
C \big(\eps + \eps^{1-2\alpha}\big)\leq C\eps^{1-2\alpha}\,,
\]
hence we have already found~(\ref{enough}) with $n=2$ if $1-2\alpha\geq \alpha$. Otherwise, the last inequality implies anyway that~(\ref{condgamma}) holds with $i=1$ and $\gamma_1=1-2\alpha$, hence again Lemma~\ref{lemmagen} provides us with some $\bar t_1\in (\eps^\alpha,2\eps^\alpha)$ satisfying~(\ref{propgamma}) and~(\ref{bootstrap}), in particular $\theta=2-3\alpha + \gamma_1$ and
\[
\int_{\Omega\setminus B_{2\eps^\alpha}} |Du_j|^2 \leq \int_{\Omega\setminus B_{\bar t_1}} |Du_j|^2
\leq C \big(\eps + \eps^{1-2\alpha+\gamma_1}\big)\leq C\eps^{2(1-2\alpha)}\,.
\]
Again, if $2(1-2\alpha)\geq \alpha$, then we have obtained~(\ref{enough}) with $n=3$; and otherwise, (\ref{condgamma}) holds true with $i=2$ and $\gamma_2=2(1-2\alpha)$. Continuing with the obvious recursion argument, we get the validity of~(\ref{condgamma}) with $\gamma_i=i(1-2\alpha)$ for every $i$ such that $i(1-2\alpha)<\alpha$; hence, we call $n$ the smallest integer such that $(n-1)(1-2\alpha)\geq\alpha$, thus~(\ref{enough}) is established. Notice that the final constant $C_0$ depends also on $n$, then actually on $\alpha$. The case $N=2$ is then concluded.\par

Let us now consider the case $N\geq 3$, which is slightly trickier. Taken any $\alpha<1/3$, we notice that the trivial estimate~(\ref{obvious}) still holds, hence we have again the validity of~(\ref{condgamma}) with $i=0$ and $\gamma_0=0$. Lemma~\ref{lemmagen} provides then some $\bar t_0 \in (0,\eps^\alpha)$ satisfying~(\ref{propgamma}) and~(\ref{bootstrap}). In particular, since $\theta=1-\alpha$ we have for every $1\leq j\leq k$ that
\[
\int_{\Omega\setminus B_{\eps^\alpha}} |Du_j|^2 \leq \int_{\Omega\setminus B_{\bar t_0}} |Du_j|^2 \leq C \Big(\eps+\eps^{\frac 12-\alpha}\Big) \leq C \eps^{\frac 12 - \alpha}\,,
\]
so~(\ref{enough}) already follows with $n=2$ if $\frac 12 - \alpha\geq \alpha$. Otherwise, (\ref{condgamma}) holds with $i=1$ and $\gamma_1=\frac 12 - \alpha$, which by Lemma~\ref{lemmagen} and again using that $\theta=1-\alpha$ gives some $\bar t_1\in (\eps^\alpha,2\eps^\alpha)$ satisfying~\eqref{bootstrap}. Thus we have
\[
\int_{\Omega\setminus B_{2\eps^\alpha}} |Du_j|^2 \leq \int_{\Omega\setminus B_{\bar t_1}} |Du_j|^2 \leq C \Big(\eps+\eps^{\frac 12-\alpha+\frac {\gamma_1}2}\Big) \leq C \eps^{\frac 32\big(\frac 12 - \alpha\big)}\,,
\]
so~(\ref{condgamma}) holds also with $i=2$ and $\gamma_2=\frac 32\big(\frac 12 - \alpha\big)$, and~(\ref{enough}) follows with $n=3$ if $\gamma_2\geq \alpha$. This time, the obvious recursion implies that~(\ref{condgamma}) holds with any $i\in\N$, where $\gamma_i$ is recursively defined by $\gamma_i=\frac 12 - \alpha + \frac{\gamma_{i-1}}2$, hence
\[
\gamma_i = \frac{2^i-1}{2^{i-1}}\,\bigg(\frac 12 - \alpha\bigg)\,.
\]
Notice that the sequence $\gamma_i$ is monotone increasing, and it converges to $1-2\alpha >\alpha$, where the last inequality is true since $\alpha<1/3$. Hence, we can define $n$ the smallest integer such that $\gamma_{n-1}\geq \alpha$, and we get then the validity of~(\ref{enough}) which, as already noticed, is enough to conclude.
\end{proof}

In the sequel, we will make use of the following definition.

\begin{definition}\label{heab}
Let $\Omega\subseteq \R^N$, $\bar t>0$, and let $\delta>0$ be given. Then, writing the generic point $0\neq x\in\R^N$ in polar coordinates as $x=\rho\theta$, with $(\rho,\theta)\in \R^+\times \S^{N-1}$, we define the sets
\begin{align*}
Q(\bar t,\delta):= \big\{(\rho,\theta)\in \R^+\times \S^{N-1}:\, (\bar t,\theta)\in S_{\bar t},\, \bar t\leq \rho\leq \bar t+\delta\big\}\,, &&
\widehat\Omega=:\Omega \cap B_{\bar t} \cup Q(\bar t,\delta)\,,
\end{align*}
and the functions
\[
\hat u_j(\rho\theta) := \left\{\begin{array}{ll}
u(\rho\theta) &\hbox{if $(\rho,\theta)\in \Omega\cap B_{\bar t}$}\,, \\
u(\bar t\theta)\bigg(1 - \bal\frac{\rho-\bar t}\delta\bigg)\eal\qquad &\hbox{if $(\rho,\theta)\in Q(\bar t,\delta)$}\,.
\end{array}\right.
\]
Notice that $\hat u_j\in H^1_0(\widehat\Omega)$.
\end{definition}

We observe now that $\lambda_j(\widehat\Omega)$ can be bounded by $\lambda_j(\Omega)$ plus a suitable power of $\eps$.
\begin{lemma}\label{lastone}
For every $N,\,k\in\N$ and $\alpha<1/(3\wedge N)$ there exist two constants $\bar\eps\ll 1$ and $C>0$ such that, if $\Omega\subseteq \R^N$ is an open set of unit measure with $\lambda_1(\Omega)\leq \lambda_1(B)+1$ and $\eps=|\Omega\Delta B|<\bar\eps$, and $\widehat\Omega$ is as in Definition~\ref{heab} with $\delta=\eps^\alpha$ and $\bar t$ given by Corollary~\ref{maincor}, then one has
\begin{equation}\label{estlamjhat}
\lambda_k(\widehat\Omega)\leq \lambda_k(\Omega) +C\eps^\alpha\,.
\end{equation}
\end{lemma}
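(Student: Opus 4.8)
The strategy is to use the min-max characterization~(\ref{min-max}) for $\widehat\Omega$ with the competitor subspace $E_k:=\mathrm{span}\{\hat u_1,\dots,\hat u_k\}\subseteq H^1_0(\widehat\Omega)$, where $\hat u_1,\dots,\hat u_k$ are the functions of Definition~\ref{heab} associated with the first $k$ eigenfunctions $u_1,\dots,u_k$ of $\Omega$ and with $\delta=\eps^\alpha$. Concretely, writing a generic element of $E_k$ as $u=\sum_{j=1}^k c_j\hat u_j$, I plan to prove that
\[
\int_{\widehat\Omega}|Du|^2\,dx\leq \big(\lambda_k(\Omega)+C\eps^\alpha\big)\sum_{j=1}^k c_j^2\,,\qquad
\int_{\widehat\Omega}u^2\,dx\geq \big(1-C\eps^\alpha\big)\sum_{j=1}^k c_j^2\,.
\]
Once these are available, for $\eps<\bar\eps$ small the right-hand side of the second inequality is strictly positive for $c\neq 0$, so $E_k$ is genuinely $k$-dimensional, and $\RR(u)\leq\frac{\lambda_k(\Omega)+C\eps^\alpha}{1-C\eps^\alpha}$ for every $u\in E_k\setminus\{0\}$; since $\lambda_k(\Omega)\leq M_k\lambda_1(\Omega)\leq M_k(\lambda_1(B)+1)$ is bounded by~(\ref{dario}), this last quantity is $\leq\lambda_k(\Omega)+C\eps^\alpha$ up to enlarging $C$, and~(\ref{estlamjhat}) follows from~(\ref{min-max}).

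For the two displayed bounds I split every integral along $\widehat\Omega=(\Omega\cap B_{\bar t})\cup Q(\bar t,\delta)$. On $\Omega\cap B_{\bar t}$ one has $\hat u_i=u_i$, and the $u_i$ being $L^2$-orthonormal eigenfunctions of $\Omega$ gives $\int_\Omega Du_i\cdot Du_j=\lambda_i(\Omega)\delta_{ij}$; hence by Cauchy--Schwarz and the third estimate in~(\ref{finalesti}), $\big|\int_{\Omega\cap B_{\bar t}}Du_i\cdot Du_j\,dx-\lambda_i(\Omega)\delta_{ij}\big|=\big|\int_{\Omega\setminus B_{\bar t}}Du_i\cdot Du_j\,dx\big|\leq C\eps^\alpha$, while, since $\bar t>0$ forces $|\Omega\setminus B_{\bar t}|\leq|\Omega\setminus B|\leq\eps$ and $\|u_i\|_{L^\infty}\leq C$ by~(\ref{boundlinfty}) and~(\ref{dario}), also $\big|\int_{\Omega\cap B_{\bar t}}u_i u_j\,dx-\delta_{ij}\big|\leq C\eps$. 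On $Q(\bar t,\delta)$ the function $\hat u_j$ is the radial extension of $u_j|_{S_{\bar t}}$ multiplied by the affine profile equal to $1$ on $\partial B_{\bar t}$ and vanishing on the outer boundary of the collar; since $\bar t\in\big((n-1)\eps^\alpha,n\eps^\alpha\big)$, the radius of $\partial B_{\bar t}$ stays between two fixed positive constants, so for $\eps$ small all ratios of radii appearing in the polar-coordinate Jacobian and in the comparison between spherical and Euclidean gradients are at most $2$. Decomposing $|D\hat u_j|^2$ into its radial and tangential parts then yields
\[
\int_{Q(\bar t,\delta)}|D\hat u_j|^2\,dx\leq\frac C\delta\int_{S_{\bar t}}u_j^2\,d\H^{N-1}+C\delta\int_{S_{\bar t}}|Du_j|^2\,d\H^{N-1}\,,
\]
which by the first two estimates in~(\ref{finalesti}) and $\delta=\eps^\alpha$ is $\leq C\eps^{(1\vee(4-N))(1-\alpha)-\alpha}+C\eps^\alpha$; and the hypothesis $\alpha<1/(3\wedge N)$ is exactly what makes $(1\vee(4-N))(1-\alpha)-\alpha\geq\alpha$, whence $\int_{Q(\bar t,\delta)}|D\hat u_j|^2\,dx\leq C\eps^\alpha$. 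Together with the crude bound $\int_{Q(\bar t,\delta)}\hat u_j^2\,dx\leq\|u_j\|_{L^\infty}^2\,|Q(\bar t,\delta)|\leq C\delta\,\H^{N-1}(\partial B_{\bar t})\leq C\eps^\alpha$, a last Cauchy--Schwarz gives $\big|\int_{Q(\bar t,\delta)}D\hat u_i\cdot D\hat u_j\,dx\big|\leq C\eps^\alpha$ and $\big|\int_{Q(\bar t,\delta)}\hat u_i\hat u_j\,dx\big|\leq C\eps^\alpha$.

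Putting the two regions together, $\int_{\widehat\Omega}D\hat u_i\cdot D\hat u_j\,dx=\lambda_i(\Omega)\delta_{ij}+R^A_{ij}$ and $\int_{\widehat\Omega}\hat u_i\hat u_j\,dx=\delta_{ij}+R^M_{ij}$ with $|R^A_{ij}|,|R^M_{ij}|\leq C\eps^\alpha$; hence, using $\lambda_i(\Omega)\leq\lambda_k(\Omega)$ for $i\leq k$ and $\sum_{i,j}|c_i||c_j|\leq k\sum_j c_j^2$, for $u=\sum_j c_j\hat u_j$ we get $\int_{\widehat\Omega}|Du|^2\,dx=\sum_j\lambda_j(\Omega)c_j^2+\sum_{i,j}R^A_{ij}c_ic_j\leq(\lambda_k(\Omega)+Ck\eps^\alpha)\sum_j c_j^2$ and $\int_{\widehat\Omega}u^2\,dx\geq(1-Ck\eps^\alpha)\sum_j c_j^2$, which are precisely the inequalities needed in the first step. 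The one genuinely delicate point is the collar bound $\int_{Q(\bar t,\delta)}|D\hat u_j|^2\,dx\leq C\eps^\alpha$: it is here that the value of $\theta$ optimised in Lemma~\ref{lemmagen} — transported into the second estimate of Corollary~\ref{maincor} — together with the restriction $\alpha<1/(3\wedge N)$, is used in an essential way, the radial contribution $\delta^{-1}\int_{S_{\bar t}}u_j^2\,d\H^{N-1}$ being exactly the term that forces both; everything else is routine bookkeeping of $O(\eps^\alpha)$ error terms.
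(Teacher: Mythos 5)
Your proposal is correct and takes essentially the same route as the paper: both use the span of the truncated-and-extended functions $\hat u_1,\dots,\hat u_k$ as a competitor in the min-max characterization for $\widehat\Omega$, both split the relevant integrals over $\Omega\cap B_{\bar t}$ and the collar $Q(\bar t,\delta)$, and both reduce the collar gradient estimate to the three bounds of Corollary~\ref{maincor} with the restriction $\alpha<1/(3\wedge N)$ entering precisely through the radial term $\delta^{-1}\int_{S_{\bar t}}u_j^2$. The only cosmetic difference is that you work directly with the approximate Gram matrices $\big(\lambda_i\delta_{ij}+R^A_{ij}\big)$ and $\big(\delta_{ij}+R^M_{ij}\big)$, whereas the paper first proves the Rayleigh-quotient bound $\RR(\hat u_j)\leq\lambda_j(\Omega)+C\eps^\alpha$ and the near-orthogonality~(\ref{boundortho}) separately and then assembles them; both pass through the same $O(\eps^\alpha)$ error bookkeeping.
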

\begin{proof}
For the sake of simplicity, we divide the proof in few steps.
\step{I}{Estimates about $\hat u_j$ on $Q(\bar t,\delta)$.}
First of all, we take any $1\leq j \leq k$ and we give the estimates about $\hat u_j$ on $Q(\bar t,\delta)$. The $L^2$ norm of $\hat u_j$ on $Q(\bar t,\delta)$ can be easily estimated thanks to~(\ref{finalesti}) as
\begin{equation}\label{L^2boundhatuj}
\int_{Q(\bar t, \delta)} \hat u_j^2\,dx\leq \delta \int_{S_{\bar t }}u_j^2\, d\H^{N-1}\leq C \eps\,.
\end{equation}
Concerning the $L^2$ norm of $D \hat u_j$ on $Q(\bar t,\delta)$, it is simpler to estimate separately the tangential and the radial part. Using again~(\ref{finalesti}), for the first one we get
\[
\int_{Q(\bar t, \delta)} |D_\tau \hat u_j|^2\,dx\leq \delta \int_{S_{\bar t }} |D_\tau u_j|^2\, d\H^{N-1}
\leq \delta \int_{S_{\bar t }} |D u_j|^2\, d\H^{N-1}
\leq C\eps^\alpha\,,
\]
while for the second one it is
\[
\int_{Q(\bar t, \delta)} |D_\rho \hat u_j|^2\,dx
\leq \frac 2\delta \int_{S_{\bar t }} u_j^2\, d\H^{N-1}
\leq C \eps^{(1\vee (4-N))(1-\alpha)-\alpha} \leq C\eps^\alpha\,,
\]
recalling that $\alpha<1/2$ if $N=2$ and $\alpha<1/3$ if $N\geq 3$. Hence, putting the two estimates together we have
\begin{equation}\label{hatujQtd}
\int_{Q(\bar t, \delta)} |D\hat u_j|^2\,dx \leq C\eps^\alpha\,.
\end{equation}
\step{II}{Estimates about $\RR(\hat u_j)$.}
In this step, we prove that for every $1\leq j \leq k$ one has
\begin{equation}\label{boundRRhatuj}
\RR(\hat u_j) \leq \lambda_j(\Omega)+C\eps^\alpha\,,
\end{equation}
while for every $m\neq j\in \{1,\, 2,\, \dots\, ,\, k\}$ it is
\begin{equation}\label{boundortho}
\bigg|\int_{\widehat\Omega} D\hat u_j \cdot D\hat u_m \bigg| + \bigg|\int_{\widehat\Omega}\hat u_j \hat u_m \bigg| \leq  C\eps^\alpha\,.
\end{equation}
In fact, by~(\ref{stiminf}) we have that
\begin{equation}\label{ujminusBt}
\int_{\Omega\setminus B_{\bar t}} u_j^2 \leq C |\R^N\setminus B_{\bar t}| \leq C\eps\,.
\end{equation}
Then, since $-\Delta u_j=\lambda_j(\Omega)u_j$ on $\Omega$ while $u_j=\hat u_j$ on $\Omega\cap B_{\bar t}$, and by (\ref{hatujQtd}) and~(\ref{ujminusBt}), we have
\[
\RR(\hat u_j)
=\frac{\bal\int_{\Omega\cap B_{\bar t}} |D u_j|^2+\int_{Q(\bar t,\delta)} |D\hat u_j|^2\eal}{\bal\int_{\Omega\cap B_{\bar t}} u_j^2 + \int_{Q(\bar t,\delta)} \hat u_j^2\eal}
\leq \frac{\bal\int_\Omega |Du_j|^2+C\eps^\alpha\eal}{\bal\int_{\Omega\cap B_{\bar t}} u_j^2\eal}
=\frac{\bal\lambda_j(\Omega)+C\eps^\alpha\eal}{\bal 1-\int_{\Omega\setminus B_{\bar t}} u_j^2\eal} \leq 
\lambda_j(\Omega) + C\eps^\alpha
\]
if $\bar\eps$ is small enough, hence~(\ref{boundRRhatuj}) already follows. To obtain~(\ref{boundortho}), instead, it is enough to recall that $u_j$ and $u_m$ are two orthogonal eigenfunctions in $\Omega$, so they are orthogonal both in $L^2(\Omega)$ and in $H^1(\Omega)$. Then, we can evaluate
\[\begin{split}
\bigg|\int_{\widehat \Omega} \hat u_j \hat u_m\bigg| &=
\bigg|\int_{\Omega\cap B_{\bar t}}  u_j  u_m + \int_{Q(\bar t,\delta)} \hat u_j \hat u_m\bigg|
=\bigg|\int_{\Omega\setminus B_{\bar t}}  u_j  u_m - \int_{Q(\bar t,\delta)} \hat u_j \hat u_m\bigg|\\
&\leq \sqrt{\int_{\Omega\setminus B_{\bar t}} u_j^2}\sqrt{\int_{\Omega\setminus B_{\bar t}} u_m^2}+\sqrt{\int_{Q(\bar t,\delta)} \hat u_j^2}\sqrt{\int_{Q(\bar t,\delta)} \hat u_m^2}\leq C\eps
\end{split}\]
by H\"older inequality, (\ref{ujminusBt}) and~(\ref{L^2boundhatuj}), and similarly
\[\begin{split}
\bigg|\int_{\widehat\Omega} D\hat u_j \cdot D\hat u_m \bigg| &=
\bigg|\int_{\Omega\setminus B_{\bar t}} D u_j \cdot D u_m -\int_{Q(\bar t,\delta)} D\hat u_j \cdot D\hat u_m\bigg|\\
&\leq \sqrt{\int_{\Omega\setminus B_{\bar t}} |D u_j|^2}\sqrt{\int_{\Omega\setminus B_{\bar t}} |D u_m|^2}+\sqrt{\int_{Q(\bar t,\delta)} |D \hat u_j|^2}\sqrt{\int_{Q(\bar t,\delta)} |D \hat u_m|^2}\leq C\eps^\alpha
\end{split}\]
by~(\ref{finalesti}) and~(\ref{hatujQtd}). From the last two bounds, (\ref{boundortho}) follows.
\step{III}{The space $E_k={\rm Span}\{\hat u_1,\, \hat u_2,\, \dots\, ,\, \hat u_k\}\subseteq H^1_0(\widehat \Omega)$ has dimension $k$.}
Let us now prove that the space $E_k$, generated by the functions $\hat u_m$ with $1\leq m\leq k$, has maximal dimension in $H^1_0(\widehat \Omega)$. Were it not so, there would be some $1\leq l \leq k$ and some numbers $\beta_m\in [-1,1]$ for every $1\leq m\leq k,\, m\neq l$, such that 
\[
\hat u_l = \sum_{1\leq m\leq k,\, m\neq l} \beta_m \hat u_m\,.
\]
As a consequence, by~(\ref{boundortho}) it would be
\[
\int_{\widehat\Omega} \hat u_l^2 = \sum_{1\leq m\leq k,\, m\neq l} \beta_m \int_{\widehat\Omega} \hat u_m \hat u_l
\leq \sum_{1\leq m\leq k,\, m\neq l} \bigg|\int_{\widehat\Omega} \hat u_m \hat u_l\bigg| \leq C\eps^\alpha\,,
\]
which, as soon as $\bar\eps\ll 1$, is in contradiction with the fact that, by~(\ref{ujminusBt}),
\begin{equation}\label{usetwice}
\int_{\widehat\Omega} \hat u_l^2 \geq  \int_{\Omega\cap B_{\bar t}} u_l^2 
= 1-\int_{\Omega\setminus B_{\bar t}} u_l^2 \geq  1 - C\eps\,.
\end{equation}
\step{IV}{Conclusion.}
We are now ready to estimate $\lambda_k(\widehat\Omega)$. Let $0\neq u\in E_k$ be a generic function; up to rescaling, we can write $u=\sum_{m=1}^k \beta_m \hat u_m$ with $\max\{ |\beta_m|,\, 1\leq m\leq k\}=1$. We can now evaluate
\[\begin{split}
\RR(u)=\frac{\bal\int_{\widehat\Omega} \Big|\sum_{m=1}^k \beta_m D\hat u_m\Big|^2\eal}{\bal\int_{\widehat\Omega} \Big(\sum_{m=1}^k \beta_m \hat u_m\Big)^2\eal}
\leq\frac{\bal \sum_m \beta_m^2 \int_{\widehat\Omega} |D\hat u_m|^2+\sum_{m\neq j} \bigg|\int_{\widehat\Omega} D\hat u_m \cdot D\hat u_j\bigg|\eal}
{\bal \sum_m \beta_m^2 \int_{\widehat\Omega} \hat u_m^2-\sum_{m\neq j} \bigg|\int_{\widehat\Omega} \hat u_m \hat u_j\bigg|\eal} =: \frac{K_1+K_2}{K_3-K_4}\,.
\end{split}\]
Keeping in mind~(\ref{boundRRhatuj}), we have
\[
\frac{K_1}{K_3} = \frac{\bal\sum_m \beta_m^2 \RR(\hat u_m) \int_{\widehat\Omega} \hat u_m^2\eal}{\bal \sum_m \beta_m^2 \int_{\widehat\Omega} \hat u_m^2\eal}
\leq \frac{\bal\sum_m \beta_m^2 \Big(\lambda_m(\Omega)+C\eps^\alpha\Big) \int_{\widehat\Omega} \hat u_m^2\eal}{\bal \sum_m \beta_m^2 \int_{\widehat\Omega} \hat u_m^2\eal}\leq \lambda_k(\Omega)+C\eps^\alpha\,.
\]
Moreover, by~(\ref{boundortho}) we have that $|K_2|+|K_4|\leq C\eps^\alpha$. Finally, (\ref{usetwice}) gives $K_3\geq 1-C\eps$. These estimates ensure that $\RR(u)=\frac{K_1+K_2}{K_3+K_4}\leq \lambda_k(\Omega)+C\eps^\alpha$. Since $0\neq u\in E_k$ was a generic function, thanks to Step~III and to the min-max principle~(\ref{min-max}) we obtain the validity of~(\ref{estlamjhat}).\end{proof}

The proof of~(\ref{estfraenkel}) follows now as an easy consequence.

\begin{proof}[Proof of Theorem~\ref{finalmain}, first part]
Let $\Omega\subseteq\R^N$ be any set of unit measure. We will prove now the validity of~(\ref{estfraenkel}) and~(\ref{estfraenkel'}), from which the left inequality in~(\ref{mainest}) and~(\ref{mainest'}) directly follows by~(\ref{bdv}). Up to a translation, we can assume that $\eps=|\Omega\Delta B|$ coincides with $d(\Omega)$. If $\lambda_k(\Omega)\geq \lambda_k(B)$, there is nothing to prove, so we can assume that $\lambda_k(\Omega)<\lambda_k(B)$. Take then any $\alpha<1/2$, if $N=2$, or $\alpha<1/3$, if $N\geq 3$, and let $\bar\eps$ be given by Lemma~\ref{lastone}. We can assume that $\eps<\bar\eps$, because otherwise~(\ref{estfraenkel}) is emptily true, up to increase the constant $C$ if necessary. We can then apply Lemma~\ref{lastone} to the set $\widehat\Omega$ given by Definition~\ref{heab} with $\delta=\eps^\alpha$, and we get the validity of~(\ref{estlamjhat}). Moreover, by construction we have that $\widehat\Omega\subseteq B_{\bar t+\eps^\alpha} \subseteq B_{(n+1)\eps^\alpha}$, where $n=n(k,N,\alpha)$ is the integer given by Corollary~(\ref{maincor}). Hence,
\[
\lambda_k(\widehat\Omega)\geq \lambda_k\big(B_{(n+1)\eps^\alpha}\big) \geq \lambda_k(B) - C\eps^\alpha\,,
\]
so that~(\ref{estfraenkel}) and~(\ref{estfraenkel'}) follow from~(\ref{estlamjhat}) and the proof is concluded.
\end{proof}

\section{The estimate of $\lambda_k(\Omega)-\lambda_k(B)$ from above\label{sec:om-b}}

In this section we prove the right inequality in~(\ref{mainest}) and~(\ref{mainest'}), thus concluding the proof of our main result. Notice that we cannot bound $\lambda_k(\Omega)-\lambda_k(B)$ by some power of the Fraenkel asymmetry; in fact, it is easy to build sets $\Omega$ of unit measure with arbitrarily small asymmetry $d(\Omega)$ and arbitrarly large eigenvalue $\lambda_k(\Omega)$. Observe also that we need an upper bound for $\lambda_k(\Omega)$; to get it, we have to find functions in $H^1_0(\Omega)$ which are almost orthogonal and whose Rayleigh quotients are close to the eigenvalues of the ball; of course one has to define these functions starting from the eigenfunctions of the ball, nevertheless these functions must vanish outside $\Omega$, so in particular in $B\setminus\Omega$, and this is clearly false for the eigenfunctions of the ball. The next lemma, which is the main ingredient of this step, will solve this issue.

\begin{lemma}\label{leminclus}
For every $k,\, N\in\N$ there exists a constant $C$ such that, for any open set $E\subseteq B\subseteq \R^N$ with $\lambda_1(E)\leq \lambda_1(B)+1$, one has
\begin{equation}\label{estcontenuto1}
\lambda_k(E)-\lambda_k(B)\leq C\sqrt{\lambda_1( E)-\lambda_1(B)}\,.
\end{equation} 
\end{lemma}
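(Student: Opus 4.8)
The plan is to use the first eigenfunction $u_1$ of $E$ to manufacture, out of the Dirichlet eigenfunctions of the ball, a $k$-dimensional subspace of $H^1_0(E)$ all of whose Rayleigh quotients on $E$ are at most $\lambda_k(B)+C\sqrt{\lambda_1(E)-\lambda_1(B)}$, and then to conclude via the min--max principle~(\ref{min-max}). Write $\eps_1:=\lambda_1(E)-\lambda_1(B)\ge 0$ (the sign coming from $E\subseteq B$ and monotonicity). If $\eps_1$ exceeds a threshold $\bar\eps=\bar\eps(k,N)$ there is nothing to prove: by~(\ref{dario}) and the hypothesis $\lambda_1(E)\le\lambda_1(B)+1$ one has $\lambda_k(E)-\lambda_k(B)\le\lambda_k(E)\le M_k\lambda_1(E)\le C(k,N)\le C\sqrt{\eps_1}$. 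So I would assume $\eps_1<\bar\eps$, with $\bar\eps$ small, to be fixed along the way.

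First I would prove that, after extending $u_1$ by zero (so that $u_1\in H^1_0(B)$, $\int_Bu_1^2=1$ and $\int_B|Du_1|^2=\lambda_1(E)$), the function $u_1$ is $\sqrt{\eps_1}$-close in $H^1_0(B)$ to the positive, $L^2$-normalized first eigenfunction $w_1$ of $B$. To this end I fix an $L^2(B)$-orthonormal basis $\{w_j\}$ of Dirichlet eigenfunctions of $B$ (so that $\int_BDw_i\cdot Dw_j=\lambda_j(B)\delta_{ij}$), expand $u_1=\sum_jc_jw_j$, and read off $\sum_jc_j^2=1$ and $\sum_jc_j^2\lambda_j(B)=\lambda_1(E)$, hence $\sum_{j\ge2}c_j^2\bigl(\lambda_j(B)-\lambda_1(B)\bigr)=\eps_1$; since $\lambda_1(B)$ is simple, $\lambda_2(B)-\lambda_1(B)>0$, so $\sum_{j\ge2}c_j^2\le C(N)\eps_1$. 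With the sign choices $w_1,u_1\ge 0$ one has $c_1=\int_Bu_1w_1\in(0,1]$, so $(1-c_1)^2\le(1-c_1^2)^2\le C\eps_1$, and expanding $u_1-w_1=(c_1-1)w_1+\sum_{j\ge2}c_jw_j$ in the system $\{w_j\}$ (orthogonal for both the $L^2$ and the Dirichlet inner product) gives $\|u_1-w_1\|_{L^2(B)}^2+\|Du_1-Dw_1\|_{L^2(B)}^2\le C(N)\eps_1$.

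Then I would build the competitors. Fix $w_1,\dots,w_k$ as above, so that ${\rm Span}\{w_1,\dots,w_k\}\subseteq H^1_0(B)$ has all Rayleigh quotients $\le\lambda_k(B)$, and set $\psi_m:=w_m/w_1$ (so $\psi_1\equiv1$). The key point is that each $\psi_m$ extends to a function of $W^{1,\infty}(B)$ with $\|\psi_m\|_{W^{1,\infty}}\le C(k,N)$: on $\{|x|\le\tfrac12\omega_N^{-1/N}\}$ this is clear since $w_1$ is bounded below there, while near $\partial B$ one writes $w_1=\rho g_1$ and $w_m=\rho g_m$ with $\rho:=\omega_N^{-1/N}-|x|$ and $g_1,g_m$ smooth, $g_1>0$ by the Hopf lemma, so that $\psi_m=g_m/g_1$. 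I then set $\phi_m:=\psi_m\,u_1\in H^1_0(E)$ (the membership holding because $u_1\in H^1_0(E)$ and $\psi_m\in W^{1,\infty}$). Since $\phi_m-w_m=\psi_m(u_1-w_1)$ on $E$, while on $B\setminus E$ one has $\phi_m=0$ and $w_m=\psi_m(w_1-u_1)$ because $u_1\equiv0$ there, differentiating and invoking the first step yields $\|\phi_m-w_m\|_{H^1_0(B)}\le C(k,N)\sqrt{\eps_1}$ for every $m=1,\dots,k$. To conclude, I take $V:={\rm Span}\{\phi_1,\dots,\phi_k\}\subseteq H^1_0(E)$; for $0\ne u=\sum_m\beta_m\phi_m\in V$, normalized so that $\max_m|\beta_m|=1$ (hence $1\le\sum_m\beta_m^2\le k$), I compare it with $\widetilde u:=\sum_m\beta_m w_m\in H^1_0(B)$, for which $\|u-\widetilde u\|_{H^1_0(B)}\le C\sqrt{\eps_1}$, $\|\widetilde u\|_{L^2(B)}^2=\sum_m\beta_m^2\ge1$ and $\int_B|D\widetilde u|^2=\sum_m\beta_m^2\lambda_m(B)\le\lambda_k(B)\|\widetilde u\|_{L^2(B)}^2$. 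Expanding $\int_E|Du|^2$ and $\int_Eu^2$ around $\widetilde u$ (the errors being $O(\sqrt{\eps_1})$, since $\|D\widetilde u\|_{L^2(B)}^2\le k\lambda_k(B)$, $\|\widetilde u\|_{L^2(B)}^2\le k$ and $\eps_1<1$), dividing, and using $\|\widetilde u\|_{L^2(B)}^2\ge1$ with $\bar\eps$ small, gives $\RR(u)\le\lambda_k(B)+C\sqrt{\eps_1}$; the same estimate applied to a hypothetical relation $\sum_m\beta_m\phi_m=0$ forces $\|\widetilde u\|_{L^2(B)}\le C\sqrt{\eps_1}<1$, a contradiction, so $\dim V=k$, and~(\ref{min-max}) then yields~(\ref{estcontenuto1}).

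The step I expect to be the main obstacle is the construction of the competitors: they must vanish on $B\setminus E$, which is exactly where $u_1$ vanishes, so the usual device of multiplying eigenfunctions of the ball by a cut-off is useless, and one is forced to multiply them instead by the \emph{almost constant} ratio $u_1/w_1$; the technical heart is then checking that $\psi_m=w_m/w_1$ is honestly Lipschitz up to $\partial B$, which rests on the Hopf lemma for $w_1$. A second point requiring care is that the gradient cross terms $\int_E D\phi_m\cdot D\phi_n$ (for $m\ne n$) are \emph{not} small — the set $B\setminus E$ can have order-one measure — so one cannot argue eigenfunction by eigenfunction; comparing the full linear combination $u$ with $\widetilde u$ on the ball, where the Rayleigh quotient of $\widetilde u$ is $\le\lambda_k(B)$ and $\|\widetilde u\|_{L^2(B)}\ge1$, is precisely what sidesteps this difficulty.
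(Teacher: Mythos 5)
Your proposal is correct and takes essentially the same route as the paper's proof. The paper also expands $u_1$ over the eigenbasis $\{v_j\}$ of $B$, deduces $\lambda_1(E)\ge\lambda_1(B)+c\,\delta^2$ with $\delta^2=1-\alpha_1^2$, sets $\varphi=v_1-u_1$, constructs the competitors $\tilde u_j=(v_j/v_1)u_1=v_j-(v_j/v_1)\varphi$, uses $\|v_j/v_1\|_{W^{1,\infty}(B)}\le K$ (your Hopf-lemma argument), and applies min--max to ${\rm Span}\{\tilde u_1,\dots,\tilde u_k\}$.

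One remark on your closing discussion: your assertion that the cross terms $\int_E D\phi_m\cdot D\phi_n$ (for $m\ne n$) are ``not small'' is actually incorrect, and the paper in fact bounds them by $C\delta$. The point is that each $\phi_m$ (extended by zero) lies in $H^1_0(B)$, so $\int_E D\phi_m\cdot D\phi_n=\int_B D\phi_m\cdot D\phi_n$, and writing $\phi_m=w_m-\psi_m\varphi$ with $\|\psi_m\varphi\|_{H^1(B)}\le C\delta$ and $\int_B Dw_m\cdot Dw_n=0$ gives smallness of order $\delta$; the largeness of $|B\setminus E|$ is irrelevant since the functions vanish there. That said, this side remark does not affect the validity of your argument: comparing the full linear combination $u=\sum_m\beta_m\phi_m$ directly with $\widetilde u=\sum_m\beta_m w_m$ is a perfectly good (and arguably slightly cleaner) way to finish, and is in spirit what the paper also does in the final step, where the approximate orthogonality estimates feed into a Rayleigh-quotient bound for a generic linear combination.
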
 
\begin{proof}
Let us call $\{v_i\}_{i\in\N}$ a basis of eigenfunctions for $B$. Keep in mind that each $v_i$ is Lipschitz up to boundary, and $v_1$ is also radial and with growth at least linear away from the boundary, that is, there exists a constant $c$ such that $v_1(x) \geq c \,{\rm dist}(x,\partial B)$. Hence, it is easily seen that there exists a constant $K$, only depending on $N$ and on $k$, such that
\begin{equation}\label{lipbound}
\bigg\|\,\frac{v_j}{v_1}\,\bigg\|_{W^{1,\infty}(B)} \leq K\,.
\end{equation}
Let now $u_1$ be a first eigenfunction for $E$. Since $E\subseteq B$, then $u_1\in H^1_0(E)\subseteq H^1_0(B)$, thus we can write $u_1$ as
\[
u_1 = \alpha_1 v_1 + \sum_{i\geq 2} \alpha_i v_i\,,
\]
and since all the eigenfunctions have unit $L^2$ norm we have $\delta^2:= \sum_{i\geq 2} \alpha_i^2 = 1 - \alpha_1^2$. Notice that
\begin{equation}\label{stimadelta}
\lambda_1(E) = \alpha_1^2 \lambda_1(B) +\sum_{i\geq 2} \alpha_i^2\lambda_i(B)=\lambda_1(B) +\sum_{i\geq 2} \alpha_i^2(\lambda_i(B)-\lambda_1(B))\geq \lambda_1(B) + C\delta^2\,.
\end{equation}
Assuming without loss of generality that $\alpha_1>0$ and calling $\varphi=v_1-u_1=(1-\alpha_1) v_1 -\sum_{i\geq 2} \alpha_i v_i$, we have
\begin{equation}\label{stf}
\|\varphi\|_{H^1(B)} \leq C\delta\,.
\end{equation}
Let us now define the functions
\begin{equation}\label{deftildeuj}
\tilde u_j :=\frac{v_j}{v_1}\, u_1= v_j -\frac{v_j}{v_1}\,\varphi\,, \qquad \forall\, 1\leq j \leq k\,,
\end{equation}
and notice that each $\tilde u_j$ belongs to $H^1_0(E)$ thanks to~(\ref{lipbound}), and that $\tilde u_1=u_1$. The key observation is that, roughly speaking, if $\lambda_1(E)\approx \lambda_1(B)$ then it must be $u_1\approx v_1$, hence $\tilde u_j\approx v_j$. Let us now be more precise. For any two indices $i\neq j \in \{1,\, 2 ,\, \dots \,,\, k\}$, by~(\ref{stiminf}) and~(\ref{lipbound}) we have
\begin{equation}\label{ortoL^2}\begin{split}
\bigg|\int_E\tilde u_i\tilde u_j\bigg|&=\bigg|\int_E \bigg(v_i -\frac{v_i}{v_1}\,\varphi\bigg)\bigg(v_j -\frac{v_j}{v_1}\,\varphi\bigg)\bigg|
=\bigg|\int_B v_i v_j - 2\int_B \frac{v_iv_j}{v_1}\,\varphi + \int_B \frac{v_iv_j}{v_1^2}\, \varphi^2\bigg|\\
&\leq 2CK \|\varphi\|_{L^1(B)} + K^2 \|\varphi\|_{L^2(B)}^2\leq C\delta\,.
\end{split}\end{equation}
Moreover, we notice that
\begin{equation}\label{stifrak}
\bigg\|D\bigg( \frac{v_i}{v_1}\, \varphi\bigg)\bigg\|_{L^2(B)} \leq \bigg\|\frac{v_i}{v_1}\bigg\|_{W^{1,\infty}(B)} \, \|\varphi\|_{H^1(B)}\leq K\delta\,,
\end{equation}
then the very same calculation as in~(\ref{ortoL^2}) give
\begin{equation}\label{ortoH^1}
\bigg|\int_E D\tilde u_i\cdot D\tilde u_j\bigg| \leq C\delta\,.
\end{equation}
Now, keep in mind that our goal is to prove~(\ref{estcontenuto1}); hence, recalling~(\ref{dario}) and up to increase the final constant $C$, we can assume that $\lambda_1(E)-\lambda_1(B)$ is as small as we wish, hence that $\delta$ is as small as we wish by~(\ref{stimadelta}). As a consequence, by a standard argument (identical to what we already did in Lemma~\ref{lastone}), the estimates~(\ref{ortoL^2}) and~(\ref{ortoH^1}) together with~(\ref{deftildeuj}) and~(\ref{stf}) imply that the subspace of $H^1_0(E)$ generated by the functions $\{\tilde u_1,\, \tilde u_2,\, \dots\, , \, \tilde u_k\}$ has dimension $k$, hence recalling also~(\ref{stifrak}) we obtain $\lambda_k(E)\leq \lambda_k(B)+C\delta$. And finally, by~(\ref{stimadelta}) this estimate gives~(\ref{estcontenuto1}).
\end{proof}

\begin{remark}\label{obviousrem}
Obviously, if a set $E$ is contained in the ball $B_c$ for some constant $c$ with $|c|<1/2$, then~(\ref{estcontenuto1}) is still true for $E$ and $B_c$, up to multiply the constant $C$ by $4$: to prove that, it is enough to recall the scaling property~(\ref{scaling}) for the eigenvalues.
\end{remark}

\begin{lemma}\label{newone}
For every $\alpha<1/(3\wedge N)$ there exist $n\in\N$ and a constant $C$ such that, for every set $\Omega\subseteq\R^N$ of unit measure with $\lambda_1(\Omega)\leq \lambda_1(B)+1$, calling $\eps=|\Omega\Delta B|$ one has
\begin{align}\label{afterlaus}
\int_{\Omega\setminus B_{n\eps^\alpha}} |Du_1|^2 \leq C\eps^\alpha\,, && \int_{\Omega\setminus B_{n\eps^\alpha}} u_1^2\leq C\eps^{1+\alpha(3-N)^+}\,.
\end{align}
\end{lemma}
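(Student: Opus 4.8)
My plan is to get the first bound in~(\ref{afterlaus}) for free from Corollary~\ref{maincor} (applied with $k=1$): the point $\bar t<n\eps^\alpha$ produced there satisfies $\int_{\Omega\setminus B_{\bar t}}|Du_1|^2\leq C\eps^\alpha$ by the last estimate in~(\ref{finalesti}), and since $B_{\bar t}\subseteq B_{n\eps^\alpha}$ this gives at once $\int_{\Omega\setminus B_{n\eps^\alpha}}|Du_1|^2\leq C\eps^\alpha$. Throughout, we may assume $\eps$ small, the estimates being otherwise trivial (up to enlarging $C$) since $\int_{\Omega\setminus B_{n\eps^\alpha}}u_1^2\leq\int_\Omega u_1^2=1$. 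The second bound in~(\ref{afterlaus}) for $N\geq 3$ is also immediate: there $(3-N)^+=0$, and as $B_{n\eps^\alpha}\supseteq B$ we have $|\Omega\setminus B_{n\eps^\alpha}|\leq|\Omega\setminus B|=\eps/2$, so~(\ref{stiminf}) yields $\int_{\Omega\setminus B_{n\eps^\alpha}}u_1^2\leq\|u_1\|_{L^\infty}^2\,\eps\leq C\eps$.

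The real content is the case $N=2$, where one must show $\int_{\Omega\setminus B_{n\eps^\alpha}}u_1^2\leq C\eps^{1+\alpha}$ when $\alpha<1/2$. I expect the main obstacle to be that the natural approach --- an $L^2$ self-improvement in the spirit of Section~\ref{sec:b-om} --- does not work: integrating $-\Delta u_1=\lambda_1(\Omega)u_1$ by parts on $\Omega\setminus B_t$ gives the identity $\lambda_1(\Omega)\int_{\Omega\setminus B_t}u_1^2=\int_{\Omega\setminus B_t}|Du_1|^2+\int_{S_t}u_1\,\partial_\rho u_1$, which only bounds the $L^2$ norm of $u_1$ on the tail by its Dirichlet energy on the \emph{same} tail, and the latter cannot be pushed below $\eps$ by bootstrapping; so one gets stuck exactly at the trivial exponent. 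The remedy, and the point where the dimension $N=2$ is genuinely used, is to upgrade the estimate on one good slice from $L^2$ to $L^\infty$ via the one-dimensional Sobolev embedding, and then propagate this pointwise smallness into the tail, using that the tail is so thin that its first eigenvalue is of order $\eps^{-1}$.

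In detail, I would: (i) apply Corollary~\ref{maincor} with $k=1$ to obtain $\bar t<n\eps^\alpha$ with $\int_{S_{\bar t}}|Du_1|^2\leq C$, $\int_{S_{\bar t}}u_1^2\leq C\eps^{2(1-\alpha)}$ and $\int_{\Omega\setminus B_{\bar t}}|Du_1|^2\leq C\eps^\alpha$; (ii) observe that, $S_{\bar t}$ being a countable union of arcs of $\partial B_{\bar t}$ on which $u_1$ vanishes at the endpoints (which lie on $\partial\Omega$), the fundamental theorem of calculus gives, for $m:=\|u_1\|_{L^\infty(S_{\bar t})}$, the bound $m^2\leq 2\|u_1\|_{L^2(S_{\bar t})}\|Du_1\|_{L^2(S_{\bar t})}\leq C\eps^{1-\alpha}$; (iii) set $T:=\Omega\setminus B_{\bar t}$ and test $-\Delta u_1=\lambda_1(\Omega)u_1$ against $\varphi:=(u_1-m)_+$, which lies in $H^1_0(T)\subseteq H^1_0(\Omega)$ since $u_1\leq m$ on $S_{\bar t}$ and $u_1=0$ on $\partial\Omega$; this gives $\int_T|D\varphi|^2=\lambda_1(\Omega)\big(\int_T\varphi^2+m\int_T\varphi\big)$, and since $\int_T|D\varphi|^2\geq\lambda_1(T)\int_T\varphi^2$ while $\lambda_1(T)\geq\lambda_1(\Omega\setminus B)\geq 2\lambda_1(B)/\eps$ by monotonicity and Faber--Krahn (recalling~(\ref{scaling})) --- hence $\lambda_1(T)\gg\lambda_1(\Omega)$ for $\eps$ small --- one gets $\|\varphi\|_{L^2(T)}\leq C\,m\,\eps\,|T|^{1/2}$; (iv) conclude, using $0\leq u_1\leq m+\varphi$ on $T$ and $|T|\leq\eps/2$,
\[
\int_{\Omega\setminus B_{n\eps^\alpha}}u_1^2\leq\int_T u_1^2\leq 2m^2|T|+2\|\varphi\|_{L^2(T)}^2\leq C\,m^2\,\eps\leq C\eps^{2-\alpha}\leq C\eps^{1+\alpha}\,,
\]
the last inequality holding because $\alpha<1/2$. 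Note that for $N\geq 3$ step (ii) fails, since $H^1$ of an $(N-1)$-dimensional slice no longer embeds into $L^\infty$, which explains why the exponent in~(\ref{afterlaus}) drops to $1$ in that case.
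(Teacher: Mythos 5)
Your proof is correct, and for the crucial case $N=2$ it takes a genuinely different route from the paper. The first estimate and the case $N\geq 3$ match the paper exactly (both fall out of Corollary~\ref{maincor} with $k=1$, plus the $L^\infty$ bound~(\ref{stiminf})). For $N=2$, however, the paper's proof is geometric: it closes the arc $S_{\bar t}$ with the chord $S$ joining its endpoints, extends $u_1$ across the lune $A$ by translation, reflects the resulting function across the line containing $S$ to land in $H^1_0(E^+)$ with $|E^+|\leq C\eps$, and then invokes Faber--Krahn on $E^+$ to convert the Dirichlet bound $\int_E |Dv|^2\leq C\eps^\alpha$ into the $L^2$ bound $\int_E v^2\leq C\eps^{1+\alpha}$. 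Your proof is instead analytic: you first upgrade the $L^2$ slice estimate to an $L^\infty$ one via the one-dimensional Gagliardo--Nirenberg inequality, getting $m^2\leq 2\|u_1\|_{L^2(S_{\bar t})}\|Du_1\|_{L^2(S_{\bar t})}\leq C\eps^{1-\alpha}$, and then run a barrier-type argument, testing the eigenvalue equation against $(u_1-m)_+\in H^1_0(T)$ and using $\lambda_1(T)\geq\lambda_1(\Omega\setminus B)\gtrsim\eps^{-1}$ (monotonicity plus Faber--Krahn and~(\ref{scaling})) to absorb the term $\lambda_1(\Omega)\int_T\varphi^2$. Both routes use the one-dimensional nature of the slice in dimension two---the paper geometrically (the chord construction), you analytically (the embedding $H^1\hookrightarrow L^\infty$ on the arc)---and both close via Faber--Krahn on a set of measure $O(\eps)$. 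A small bonus of your approach: it actually yields the slightly stronger bound $\int_{\Omega\setminus B_{n\eps^\alpha}}u_1^2\leq C\eps^{2-\alpha}$, which beats the paper's $\eps^{1+\alpha}$ when $\alpha<1/2$, although of course $\eps^{1+\alpha}$ is all that is needed downstream. One point worth being a bit more explicit about (it is implicit in your step (ii)) is why $\varphi=(u_1-m)_+\mathbb{1}_{T}$ genuinely lies in $H^1_0(\Omega)$: since $(u_1-m)_+\in H^1_0(\Omega)$ and its trace on $\partial B_{\bar t}$ vanishes (by the definition of $m$ and the continuity of $u_1|_{\partial B_{\bar t}}$, itself a consequence of the choice of $\bar t$ guaranteeing $\int_{S_{\bar t}}|Du_1|^2<\infty$), the restriction to $T$ extended by zero is again in $H^1$, and being dominated by $(u_1-m)_+\in H^1_0(\Omega)$, it belongs to $H^1_0(\Omega)$.
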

\begin{proof}
Let us first consider the case $N\geq 3$. Then, we can apply Corollary~\ref{maincor} with $k=1$, finding $n$ and $C$. Thanks to~(\ref{enough}), we already have the first estimate in~(\ref{afterlaus}). Concerning the second one, for $N\geq 3$ we only have to show that $\int_{\Omega\setminus B_{n\eps^\alpha}} u_1^2\leq C\eps$, and in fact this follows immediately from the uniform $L^\infty$ estimate~(\ref{stiminf}) and the fact that $|\Omega\setminus B_{n\eps^\alpha}|\leq \eps$.\par

Let us now consider that case $N=2$. We start again by applying Corollary~\ref{maincor} with $k=1$, and again we obtain $n$, $C$, and some $\bar t<n\eps^\alpha$ such that~(\ref{finalesti}) and~(\ref{enough}) hold; the latter implies again the first estimate in~(\ref{afterlaus}). The second one, this time, is more complicate, since we have to show $\int_{\Omega\setminus B_{n\eps^\alpha}} u_1^2\leq C\eps^{1+\alpha}$, which is no more obvious from~(\ref{stiminf}). Recall that $\H^1(S_{\bar t})\leq C\eps^{1-\alpha}$, since $\bar t$ has been provided by Lemma~\ref{lemmagen}, thus~(\ref{propgamma}) holds. We now argue for simplicity under the assumption that $S_{\bar t}$ is connected, hence a connected arc: in the general case, it suffices to repeat the same argument for any connected component.\par

Let $S$ be the segment joining the two endpoints of $S_{\bar t}$, and let $A$ be the set whose boundary is given by $S_{\bar t}\cup S$. We define now the set $E=A \cup (\Omega\setminus B_{\bar t})$, and we introduce the following function $v\in H^1(E)$: assuming without loss of generality that $S$ is a horizontal segment, for every $(x,y)\in A$ there exists a unique $z$ such that $(x,z)\in S_{\bar t}$, and we let
\[
v(x,y):=\left\{\begin{array}{ll}
u_1(x,y) \qquad &\hbox{if $(x,y)\in \Omega\setminus B_{\bar t}$}\,,\\
u_1(x,z) \qquad &\hbox{if $(x,y)\in A$}\,.
\end{array}\right.
\]
Since the length of $S$ is at most $C\eps^{1-\alpha}$, thus the distance between $S$ and $S_{\bar t}$ is at most $C\eps^{2-2\alpha}$, so we can easily estimate
\begin{equation}\label{brnd}
\int_E |Dv|^2 \leq \int_{\Omega\setminus B_{\bar t}} |Du_1|^2 + C\eps^{2-2\alpha} \int_{S_{\bar t}} |Du|^2 \leq C\eps^\alpha + C\eps^{2-2\alpha} \leq C\eps^\alpha\,.
\end{equation}
In addition, we have $|E|= |\Omega\setminus B_{\bar t}| + |A|\leq \eps + |S|^3\leq C\eps$. Notice now that $v$ belongs to $H^1(E)$, not to $H^1_0(E)$; nevertheless, since $E$ is entirely on one of the two parts in which $\R^2$ is divided by the line containing $S$, we can let $E^+$ be the union of $E$ with its symmetrical copy with respect to this line, and we can extend $v$ on $E^+\setminus E$ by symmetry; in this way, the resulting function $v$ belongs by construction to $H^1_0(E^+)$. Since $|E^+|=2|E|\leq C\eps$, by~(\ref{scaling}) we have $\lambda_1(E^+)\geq (C\eps)^{-1}$. And in turn, by~(\ref{brnd}) this implies
\[
\int_{\Omega\setminus B_{n\eps^\alpha}} u_1^2 \leq \int_{\Omega\setminus B_{\bar t}} u_1^2 \leq \int_E v^2 = \frac 12 \int_{E^+} v^2
\leq \frac {C\eps}2\, \int_{E^+} |Dv|^2 
=  C\eps \int_E |Dv|^2\leq C\eps^{1+\alpha}\,,
\]
which is the second inequality in~(\ref{afterlaus}). The proof is then concluded also in the planar case.
\end{proof}

\begin{corollary}\label{corollast}
Let $\alpha,\,n,\,\Omega$ and $\eps$ be as in Lemma~\ref{newone}. Calling $\widetilde\Omega=\Omega\cap B_{(n+1)\eps^\alpha}$, one has
\begin{equation}\label{newcor}
\lambda_1(\widetilde\Omega) \leq \lambda_1(\Omega)+ C\eps^\alpha\,.
\end{equation}
\end{corollary}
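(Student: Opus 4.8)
The plan is to produce a single competitor function in $H^1_0(\widetilde\Omega)$, obtained by cutting off a first eigenfunction of $\Omega$ just inside $B_{(n+1)\eps^\alpha}$, whose Rayleigh quotient exceeds $\lambda_1(\Omega)$ by at most $C\eps^\alpha$, and then to conclude by the variational characterisation~(\ref{min-max}) with $k=1$. Note that by monotonicity we already have $\lambda_1(\widetilde\Omega)\ge\lambda_1(\Omega)$, so~(\ref{newcor}) is a near-matching upper bound, obtained by a test-function argument in the spirit of Lemma~\ref{lastone} (but simpler, since now $\widetilde\Omega\subseteq\Omega$ and only the first eigenvalue is involved). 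As usual, we may assume $\eps$ to be as small as we wish, since for $\eps$ bounded away from $0$ the estimate is controlled by the trivial bound~(\ref{dario}) in the final assembly.

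First I would fix a radial Lipschitz cut-off $\eta\colon\R^N\to[0,1]$ which is identically $1$ on $B_{n\eps^\alpha}$, vanishes outside $B_{(n+1)\eps^\alpha}$, and is radially affine on the intermediate annulus, so that $|D\eta|\le C\eps^{-\alpha}$ and $D\eta$ is supported in that annulus. Letting $u_1$ be a first eigenfunction of $\Omega$ and setting $w:=\eta\,u_1$, we have $w\in H^1_0(\widetilde\Omega)$, because $u_1\in H^1_0(\Omega)$ and $\eta$ is compactly supported inside $B_{(n+1)\eps^\alpha}$, hence $\mathrm{supp}\,w\subseteq\Omega\cap B_{(n+1)\eps^\alpha}=\widetilde\Omega$.

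Then I would estimate $\RR(w)$. Since $Dw=\eta\,Du_1+u_1\,D\eta$, with $D\eta$ vanishing on $B_{n\eps^\alpha}$, while on the annulus $|Dw|^2\le 2|Du_1|^2+2u_1^2|D\eta|^2\le 2|Du_1|^2+C\eps^{-2\alpha}u_1^2$, one gets
\[
\int_{\widetilde\Omega}|Dw|^2\ \le\ \int_\Omega|Du_1|^2\ +\ C\int_{\Omega\setminus B_{n\eps^\alpha}}|Du_1|^2\ +\ C\eps^{-2\alpha}\int_{\Omega\setminus B_{n\eps^\alpha}}u_1^2\,.
\]
By the two estimates in~(\ref{afterlaus}) the error is at most $C\eps^\alpha+C\eps^{-2\alpha}\eps^{1+\alpha(3-N)^+}$; the exponent of the second term equals $1-2\alpha$ for $N\ge 3$ and $1-\alpha$ for $N=2$, and in both cases it is strictly larger than $\alpha$ thanks to $\alpha<1/(3\wedge N)$, so for $\eps$ small the whole error is $\le C\eps^\alpha$ and hence $\int_{\widetilde\Omega}|Dw|^2\le\lambda_1(\Omega)+C\eps^\alpha$. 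For the denominator, the second inequality in~(\ref{afterlaus}) gives $\int_{\widetilde\Omega}w^2\ge\int_{\Omega\cap B_{n\eps^\alpha}}u_1^2=1-\int_{\Omega\setminus B_{n\eps^\alpha}}u_1^2\ge 1-C\eps\ge\tfrac12$ for $\eps$ small. Combining the two bounds and using that $\lambda_1(\Omega)\le\lambda_1(B)+1$ is bounded, $\RR(w)\le(\lambda_1(\Omega)+C\eps^\alpha)/(1-C\eps)\le\lambda_1(\Omega)+C\eps^\alpha$; then~(\ref{min-max}) with $k=1$ yields $\lambda_1(\widetilde\Omega)\le\RR(w)\le\lambda_1(\Omega)+C\eps^\alpha$, which is~(\ref{newcor}).

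The only point requiring a little care is the exponent bookkeeping in the annular error term: the factor $\eps^{-2\alpha}$ produced by $|D\eta|^2$ must be beaten by the decay of $\int_{\Omega\setminus B_{n\eps^\alpha}}u_1^2$, which is exactly what the second estimate in~(\ref{afterlaus}) supplies — and this is precisely why the $L^2$-smallness of $u_1$ far from the origin (and, in the planar case, the reflection argument behind it in Lemma~\ref{newone}) had to be established beforehand, as well as why the thresholds $\alpha<1/3$ for $N\ge3$ and $\alpha<1/2$ for $N=2$ appear.
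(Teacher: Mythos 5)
Your proof is correct and is essentially the same as the paper's: both cut off $u_1$ by a radial Lipschitz function supported in $B_{(n+1)\eps^\alpha}$ that equals $1$ on $B_{n\eps^\alpha}$, use~(\ref{afterlaus}) to control the annular error, and conclude via the Rayleigh quotient. The only cosmetic difference is that you absorb the cross term in $|Dw|^2$ via Young's inequality while the paper keeps it and applies Cauchy--Schwarz; both choices give an error $O(\eps^\alpha)$ under the same restriction $\alpha<1/(3\wedge N)$.
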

\begin{proof}
Let us define the function $\rho:\R^+\to [0,1]$ as
\[
\rho(r):=
\left\{\begin{array}{ll}
1\,, &\hbox{if } r\leq \omega_N^{-1/N}+n\eps^\alpha\,,\\
1-\frac{r-\omega_N^{-1/N}-n\eps^\alpha}{\eps^\alpha}\,,\qquad &\hbox{if }\omega_N^{-1/N}+n\eps^\alpha< r\leq \omega_N^{-1/N}+(n+1)\eps^{\alpha}\,,\\
0\,,&\hbox{if }r>\omega_N^{-1/N}+(n+1)\eps^\alpha\,,
\end{array}\right.
\]
and let us define $\tilde u(x)=u_1(x) \rho(|x|) \in H^1_0(\widetilde\Omega)$. By~(\ref{afterlaus}) and the fact that $\alpha<1/(3\wedge N)$, it is
\[\begin{split}
\int_{\widetilde\Omega} |D\tilde u|^2 &= \int_{\Omega\cap B_{n\eps^\alpha}} |Du_1|^2 + \int_{\widetilde\Omega\setminus B_{n\eps^\alpha}} \bigg(\rho Du_1 -  u_1 \eps^{-\alpha} \frac x{|x|}\bigg)^2\\
&\leq \int_\Omega |Du_1|^2 + \eps^{-2\alpha} \int_{\Omega\setminus B_{n\eps^\alpha}} u_1^2 + \eps^{-\alpha} \int_{\Omega\setminus B_{n\eps^\alpha}} u_1 |Du_1|\\
&\leq \lambda_1(\Omega) + C \eps^{1+\alpha((3-N)^+-2)} + C\eps^{\frac{1+\alpha((3-N)^+-1)}2}\leq \lambda_1(\Omega) + C\eps^\alpha\,.
\end{split}\]
Since, on the other hand, $\int_{\widetilde\Omega} \tilde u^2\geq \int_B u_1^2\geq 1-C\eps$, we conclude the validity of~(\ref{newcor}).
\end{proof}

We can then easily conclude the proof of Theorem~\ref{finalmain}.
\begin{proof}[Proof of Theorem~\ref{finalmain}, second part]
We only have to prove the estimate of $\lambda_k(\Omega)-\lambda_k(B)$ from above. Let $\alpha<1/(3\wedge N)$, and let $\Omega\subseteq\R^N$ be any set with unit measure such that $\lambda_1(\Omega)\leq \lambda_1(B)+1$. Up to a translation, we can assume that $\eps=|\Omega\Delta B|=d(\Omega)$, so that by~(\ref{bdv}) we have that $\eps\leq C\sqrt{\lambda_1(\Omega)-\lambda_1(B)}$. Let us also call $\widetilde\Omega =\Omega\cap{B_{(n+1) \eps^\alpha}}$, where $n$ is given by Lemma~\ref{newone}. By Lemma~\ref{leminclus} (together with Remark~\ref{obviousrem}) and Corollary~\ref{corollast}, and since $\widetilde\Omega\subseteq B_{(n+1)\eps^\alpha}$, we have
\[\begin{split}
\lambda_k(\Omega)-\lambda_k(B) &\leq \lambda_k(\widetilde\Omega) - \lambda_k(B_{(n+1)\eps^\alpha})
\leq C \Big(\lambda_1(\widetilde\Omega) - \lambda_1(B_{(n+1)\eps^\alpha})\Big)^{1/2}\\
&\leq C \Big(\lambda_1(\Omega)  - \lambda_1(B)+C\eps^\alpha\Big)^{1/2} \leq C\big(\lambda_1(\Omega)-\lambda_1(B)\big)^{\alpha/4}\,,
\end{split}\]
which gives the right inequality in~(\ref{mainest}) and~(\ref{mainest'}). The proof is then concluded.
\end{proof}

We conclude with a brief comments on the sharp exponents. We do not believe that the exponents found in~(\ref{mainest}) and~(\ref{estfraenkel}), and in~(\ref{mainest'}) and~(\ref{estfraenkel'}), are sharp. In particular, as already observed at the end of Section~\ref{sec:prel}, for $k=2$ the right inequality in~(\ref{mainest}) and~(\ref{mainest'}) holds with exponent $1$. Nonetheless, as far as we know Theorem~\ref{finalmain} is the first result where it is provided a quantitative estimate for the eigher eigenvalues of sets close to the ball with exponents that do \emph{not} depend on the dimension.

\appendix
\section*{Acknowledgments.}
This work originated from a question suggested to us by Antoine Henrot. The first author has been supported by the ERC advanced grant n.~339958 ``COMPAT''.

\end{document}